 \newtheorem{thm}{Theorem}[section]
 \newtheorem{cor}[thm]{Corollary}
 \newtheorem{prop}[thm]{Proposition}
 \theoremstyle{definition}
 \newtheorem{defn}[thm]{Definition}
 \theoremstyle{remark}
 \newtheorem{rem}[thm]{Remark}
 \numberwithin{equation}{section}
\begin{document}

\title{On fiber bundles and  quaternionic slice regular functions}
\small{
\author
{Jos\'e Oscar Gonz\'alez-Cervantes}
\vskip 1truecm
\date{\small Departamento de Matem\'aticas, ESFM-Instituto Polit\'ecnico Nacional. 07338, Ciudad M\'exico, M\'exico\\ Email: jogc200678@gmail.com}}

\maketitle

\begin{abstract}

The papers \cite{O1,O2} are the first works to apply the theory of fiber bundles  in  the  study   of the quaternionic slice regular functions.

The main goal of the present work is to extend the results given
 in \cite{O1}, where the quaternionic right linear space of quaternionic slice regular functions was  presented as the  base space of a fiber bundle. When the quaternionic right linear space of quaternionic slice regular functions is associated to certain   domains then this paper shows that   the elements of total space, given  in \cite{O1},  are 
defined  from a pair of harmonic functions and a pair of orthogonal vectors. Simplifying the computations presented in \cite{O1},  where each   element  of the total space is formed by   two pair of conjugate harmonic functions and a pair of orthogonal unit vectors. 

This work  also gives some  interpretations of the behavior of the zero sets of some quaternionic slice regular polynomials 
 in terms of the theory of fiber bundles.

  \end{abstract}

\noindent
\textbf{Keywords.} Quaternionic slice regular functions,  Fiber bundles, Harmonic functions, Quaternionic slice regular polynomials,  Zero sets.\\
\textbf{AMS Subject Classification (2020):} Primary 30G35; Se\-con\-dary 46M20.

\section{Introducction}

The theory  of  fiber
 bundle in algebraic topology arises in 1950 and  N. E. Steenrod published the first textbook on fiber bundles in  1951, see  \cite{NS}. One the first applications of this theory was to give a mathematical interpretation of many physical phenomena, see \cite{BP, BD,  W}.   

In addition,    paper  \cite{O1} shows that  the quaternionic right linear space of slice regular functions is the total space of a fiber bundle intrinsically defined from the Representations Theorem and Splitting Lemma.  What is more,     the   quaternionic slice regular functions is defined on the total space of  some sphere bundles in paper \cite{O2}.

Let us recall that  given a two-dimensional harmonic function $u$  defined on a domain  one can find a holomorphic function such that Re$f=u$.  Particularly, if $D$ is a   disk then 
 the Schwarz's formula helps us  to obtain $f$. As a consequence of the above results this work shows a simplified version of the computations presented  in \cite{O1} and a supplement to   \cite{BW}, where the   relationship between the harmonicity with the slice regularity was studied. 

The zero sets of the  quaternionic slice regular polynomials
 was studied    in \cite{CSS2,GSt,V}.   Now this paper interprets   a relationship   between some   quaternionic slice regular polynomials with their  zero sets    in terms of the theory of fiber bundles. 

The structure of the paper is as follows:  Section \ref{pre} shows some basic facts about the 
conjugate harmonic functions, the zero sets of complex polynomials  and the theory   presented in \cite{O1}.   Section  \ref{MainResults} has three  subsections,  
the first one  presents  a simplified version of the fiber bundle  induced by the theory of quaternionic slice regular functions. 
	The relationship of some slice regular polynomials with its zero sets  
	is studied  using the  fiber bundles theory in Subsection \ref{ZeroSets}.

\section{Preliminaries}\label{pre}

Below we give basic definitions and facts on the  harmonic functions,  the zero set of complex polynomials  and on the fiber bundles. These notions will be used throughout the whole paper.

 \subsection{Basic definitions and facts on the  harmonic functions and complex polynomials}

Let us recall that  given a harmonic function  $u$ defined on  a 
simply connected domain $G\subset\mathbb C$  then a conjugate  harmonic of $u$ is  
\begin{align}\label{conj_Harm}v = \int_{(x_0,y_0)} ^{(x ,y)}  (-u_y dx + u_x dy),
\end{align}
and it is   uniquely determined up to an additive constant, see 
 \cite{NP} and if we assume that $G$ is the disk $|z|\leq \rho$ then  any holomorphic function with the real part $u$ is given by  
\begin{align}\label{SchwarzForm}\frac{1}{2\pi}  \int_0^{2\pi} u(\rho e^{it})\frac{\rho e^{it} + z }{\rho e^{it} - z}  dt + i \lambda,\end{align}
for $|z|< \rho$, where $\lambda\in\mathbb R$, see  the  Schwarz's Formula in \cite{A}.

On the other hand,  given a finite set $A\subset\mathbb C$ 
there exists an  unique monic complex polynomial $f$ such that $Z_f=A$, where $Z_f$ is the zero set of $f$. What is more, the  mapping   from the family of  finite subsets of $\mathbb C$  to the set of complex monic polynomials: $A\mapsto f$ is a    
bijective mapping. 
In addition, the Gauss-Lucas Theorem shows that $Z_{f'}\subset Kull (Z_f)$ for any complex polynomial $f$, where $Kull (Z_f)$ is the convex hull of $Z_f$ that is the intersection of all convex set that contain $Z_f$, see \cite{A,M}.

\subsection{Rudiments of quaternionic analysis and fiber bundles}

 The skew-field  of quaternions, denoted by  $\mathbb H$, consists of  $q=x_0  + x_{1} {e_1} +x_{2} e_2 + x_{3} e_3$ where $x_0, x_1, x_2, x_3\in\mathbb R$ and   
$e_1^2=e_2^2=e_3^2=-1$,  $e_1e_2=-e_2e_1=e_3$, $e_2e_3=-e_3e_2=e_1$, $e_3e_1=-e_1e_3=e_2$. The sets 
 $\{e_1,e_2,e_3\}$ and $\{1,e_1,e_2,e_3\}$ are     the standard basis of $\mathbb R^3$ and   $\mathbb H$, respectively. 
 The vector part of $q\in \mathbb H$ is  ${\bf{q}}= x_{1} {e_1} +x_{2} e_2 + x_{3} e_3$ and the  real part   is $q_0=x_0$. The conjugate quaternionic   of $q$  is   $\bar q=q_0-{\bf q} $ and its  norm  is  $\|q\|:=\sqrt{x_0^2 +x_1^2+x_2^3+x_3^2}= \sqrt{q\bar q} = \sqrt{\bar q  q}$.

The quaternionic unit open ball is  $\mathbb B^4(0,1):=\{q \in \mathbb H\ \mid \ \|q\|<1 \}$. 
Usually,  given $q\in \mathbb H$ and $\rho > 0$ denote 
$ \mathbb B^4(q, \rho )= \{r \in \mathbb H \ \mid  \   \|q-r\|< \rho \}$.
The unit  spheres   in  $\mathbb R^3$  and  in  $\mathbb H $ 
 are      
$   \mathbb{S}^2:=\{{\bf q}\in\mathbb R^3  \mid \|{\bf q}\| =1  \}$    and  $\mathbb{S}^3:=\{ {  q}\in\mathbb H \mid  \|{  q}\| =1   \} $,  respectively.

	The set  $ T$ consists of  $({\bf i},{\bf j}) \in \mathbb S^2 \times \mathbb S^2 $ such that  $ \{{\bf i},{\bf j},{\bf ij}\}$   is co-oriented with the standard basis of $ \mathbb R^3$.

Due to  ${\bf i}^{2}=-1$ for all ${\bf i}\in \mathbb S^2$  we see that    $\mathbb{C}({\bf i}):=\{x+{\bf i}y; \ |\ x,y\in\mathbb{R}\}\cong \mathbb C$ as fields. What is more, if  ${\bf q}\neq {\bf 0}$ then   $q$ can be rewritten by   $x+ {\bf I}_q y $ where  $x,y\in \mathbb R$ and ${\bf I}_q:=\|  {\bf q}\|^{-1}{\bf q}\in \mathbb S^2$. If    ${\bf q}={\bf 0}$ then choose $y=0$.

Given $u\in \mathbb S^3$, the mapping ${\bf q} \mapsto u{\bf q}\bar u$ for all ${\bf q}\in \mathbb R^3$ is a quaternionic rotation  that    preserves $\mathbb R^3$, see \cite{HJ}.  Define $R_{u}:T\to T$ by  $R_{u}({\bf i}, {\bf j}):= (u{\bf i}\bar u, u{\bf j}\bar u)$ for all $({\bf i}, {\bf j})\in T$ and consider the norm $\|\cdot\|_{\mathbb R^6}$ in  $T$.

   A real differentiable function $f:\Omega\to \mathbb{H}$, where $\Omega\subset\mathbb H$ is an open,
		is called left slice regular function, or slice regular function on $\Omega$,   
if   
\begin{align*}
\overline{\partial}_{{\bf i}}f\mid_{_{\Omega\cap \mathbb C({\bf i})}}:=\frac{1}{2}\left (\frac{\partial}{\partial x}+{\bf i} \frac{\partial}{\partial y}\right )f\mid_{_{\Omega\cap \mathbb C({\bf i})}}=0  \textrm{ on  $\Omega\cap \mathbb C({\bf i})$,}
\end{align*}
for all ${\bf i}\in \mathbb{S}^2$. The  derivative of $f$, or Cullen's  derivative,  is   
$f'=\displaystyle 
 {\partial}_{{\bf i}}f\mid_{_{\Omega\cap \mathbb C({\bf i})}} = \frac{\partial}{\partial x} f\mid_{_{\Omega\cap \mathbb C({\bf i})}}= \partial_xf\mid_{_{\Omega\cap \mathbb C({\bf i})}}$.
The quaternionic right linear space of the  slice regular functions on $\Omega$ is denoted by $\mathcal{SR}(\Omega)$, see  \cite{CGS3, newadvances, CSS, CSS2, GenSS}.  A set $U\subset\mathbb H$  is called axially symmetric  slice domain, or  axially symmetric  s-domain, if $U\cap \mathbb R\neq \emptyset$, $U_{\bf i} = U\cap \mathbb C({\bf i})$ is  a domain in   $\mathbb C({\bf i})$  for all ${\bf i}\in\mathbb S^2$, and 
 $x+{\bf i}y \in U$ with $x,y\in\mathbb R$  implies  $\{x+{\bf j}y \ \mid  \ {\bf j}\in\mathbb{S}^2\}\subset U$.  

Given ${\bf i }\in\mathbb S^2$ an  axially symmetric s-domain $\Omega\subset \mathbb H$ 
shall be called ${\bf i}$-simply connected  axially symmetric s-domain
 if  
$\Omega_{\bf i}\subset\mathbb C({\bf i}) $ 
is domain simply connected. From  the symmetric property of $\Omega$  one  sees that  
$\Omega_{\bf j}\subset \mathbb C({\bf j})$ is a simply connected domain  for all  ${\bf j}\in \mathbb S^2$.
The axially symmetric s-domains $\Omega, \Xi\subset \mathbb H$  are      ${\bf i}$-conformally equivalent iff    there exists   $\alpha\in \mathcal {SR}(\Omega)$ such that  $Q_{\bf {i,j}}[\alpha] : \Omega_{\bf i}\to \Xi_{\bf i}  $ is a biholomorphism.

Let us mention an important two property of the quaternionic slice regular functions.

  Splitting Lemma. Given an  axially symmetric  s-domain $\Omega\subset \mathbb H$ and $f\in \mathcal {SR}(\Omega)$.  For every ${\bf i},{\bf j}\in \mathbb{S}$, orthogonal  to  each other,
there exist $F,G\in \textit{Hol}(\Omega_{\bf i})$,   holomorphic functions,  
such that  $f_{\mid_{\Omega_{\bf i}}} =F +G  {\bf j}$ on $\Omega_{\bf i}$, see \cite{CSS}.

Representation Formula. 
Given an  axially symmetric  s-domain $\Omega\subset \mathbb H$ and $f\in \mathcal {SR}(\Omega)$.
 For every  $q=x+{\bf I}_q y \in \Omega$ with $x,y\in\mathbb R$ and  ${\bf I}_q \in \mathbb S^2$ one has that 
\begin{align*}
f(x+{\bf I}_q y) = \frac {1}{2}[   f(x+{\bf i}y)+ f(x-{\bf i}y)]
+ \frac {1}{2} {\bf I}_q {\bf i}[ f(x-{\bf i}y)- f(x+{\bf i}y)],
\end{align*} 
for all ${\bf i}\in \mathbb S^2$, see  \cite{newadvances}.

From the previous results one has   the  following  operators:
\begin{itemize}
\item $
     Q_{ {\bf i},{\bf j}} :    \mathcal{SR}(\Omega) \to \textrm{Hol}(\Omega_{  {\bf i}})+ \textrm{Hol}(\Omega_{ {\bf i}}){  {\bf j} }$ given by 
$ Q_{ {\bf i} , {\bf j} } [f ]= f\mid_{\Omega_{{\bf i}}} = f_1+f_2{\bf j}$ for all $f\in\mathcal{SR}(\Omega)$ where $f_1,f_2\in \textrm{Hol}(\Omega_{\bf i})$, space of holomorphic function on $\Omega_{\bf i}$. 
\item $ P_{  {\bf i},{\bf j} } :  \textrm{Hol}(\Omega_{  {\bf i}})+ \textrm{Hol}(\Omega_{  {\bf i}}){  {\bf j}} \to  \mathcal{SR}(\Omega)$  defined by 
\begin{align}\label{operatorsP}
   P_{ {\bf i},{\bf j} }[g](q)= \frac{1}{2}\left[(1+ {\bf I}_q{ \bf  i})g(x-y{{\bf i}}) + (1- {\bf I}_q {  {\bf i}}) g(x+y {  {\bf i} })\right]
	, 	\end{align} 
	for all  $g\in  \textrm{Hol}(\Omega_{  {\bf i} })+ \textrm{Hol}(\Omega_{  {\bf i} }){  {\bf j} }$ where  $ q=x+{\bf I}_qy \in \Omega$.
	\end{itemize}
	  What is more,
 \begin{align}\label{ecua111} P_{  {\bf i} ,  {\bf j}  }\circ Q_{ {\bf i} ,{\bf j}  }= \mathcal I_{\mathcal{SR}(\Omega)} \quad \textrm{and} \quad    Q_{  {\bf i} ,  {\bf j} }\circ P_{ {\bf i} ,  {\bf j}   }= \mathcal I_{ \textrm{Hol}(\Omega_{ {\bf i} })+ \textrm{Hol}(\Omega_{ {\bf i} }){ {\bf j} } },
\end{align} 
where $\mathcal I_{\mathcal{SR}(\Omega)}$  and $\mathcal I_{ \textrm{Hol}(\Omega_{ {\bf i} })+ \textrm{Hol}(\Omega_{ {\bf i} }){  {\bf j}  } }$ are  the identity operators in  $\mathcal{SR}(\Omega)$ and    in  ${\textrm{Hol}(\Omega_{ {\bf i} })+ \textrm{Hol}(\Omega_{ {\bf i} }){ {\bf j} } }$, respectively, see  \cite{GS}.

Given  $g\in \mathcal{SR}(\Omega)$ and     $({\bf i},{\bf j}) \in T$,  the real components of $Q_{{\bf i},{\bf j}}[g]$ are given by
\begin{align}\label{equa1234} &  D_1[g, {\bf i}, {\bf j}   ] :=   \frac{    Q_{{\bf i},{\bf j}}[g] + \overline{   Q_{{\bf i},{\bf j}}[g]  }   }{ 2 }  ,  & \  D_2[g, {\bf i}, {\bf j}     ] :=  - \frac{  Q_{{\bf i},{\bf j} }[g] {\bf i}  +  \overline{   Q_{{\bf i} , {\bf j} }[g]   {\bf i}  }   }{2}   , \nonumber \\
&  D_3[g, {\bf i}, {\bf j}      ] :=   - \frac{ Q_{{\bf i},{\bf j} }[g]  {\bf j}   + \overline{   Q_{{\bf i},{\bf j} }[g]   {\bf j} }   }{2}   , &  \  D_4[g, {\bf i}, {\bf j}     ] :=    \frac{ Q_{ {\bf i}, {\bf j} }[g] {\bf j}{\bf i}  + \overline{   Q_{ {\bf i},{\bf j} }[g]   {\bf j}{\bf i}    }   }{2} ,
\end{align}
i.e.,
\begin{align}\label{ecua678} Q_{{\bf i},{\bf j}}[g] =  D_1[g, {\bf i}, {\bf j}   ]  +  D_2[g, {\bf i}, {\bf j}   ] {\bf i}   +  D_3[g, {\bf i}, {\bf j}   ] {\bf j}+  D_4[g, {\bf i}, {\bf j}  ] {\bf i}{\bf j} .\end{align}

 Let us recall several properties of the quaternionic slice regular functions:  For any    $f,g \in \mathcal {SR}(\Omega) $  define   
\begin{align}\label{iproducto} 
f\bullet_{_{{\bf i}, {\bf j}}} g := P_{\bf{i},{\bf j}} [ \  f_1g_1+ f_2g_2 {\bf j} \ ] ,
\end{align}
   where $Q_{{\bf i}, {\bf j}}[f]=f_1+f_2{\bf j}$ and  $Q_{{\bf i}, {\bf j}}[g]=g_1+g_2{\bf j}$ with $f_1,f_2,g_1,g_2\in \textrm{Hol}(\Omega_{\bf i})$. Also note that  
 \begin{align}\label{identity1} f + {\bf i }f {\bf i } = 2 f_1   \quad \textrm{and} \quad     
f - {\bf i }f {\bf i } =  2 f_2 {\bf j }
,
\end{align}
on $\Omega_{\bf i}$. What is more,  if $f,g\in \mathcal {SR} (\mathbb B^4(0,1))$  there exist $(a_n) , (b_n)\subset \mathbb H$ such that  
$$ 
f(q)= \sum_{n=0}^{\infty} q^n a_n, \quad   g(q)= \sum_{n=0}^{\infty} q^n b_n \quad \textrm{ and } \quad  
 f*g(q):= \sum_{n=0}^{\infty} q^n  \sum_{k=0}^{n} a_k b_{n-k},$$ 
 for all $q\in\mathbb B^4(0,1)$, see \cite{newadvances}
 By $\mathcal {PSR(\mathbb H)} $ we mean the set of  the monic quaternionic slice regular polynomials, i.e.,
 $f\in  \mathcal {PSR(\mathbb H)} $ iff there exists $a_0,a_1 \dots, a_{n-1}\in \mathbb H$ such that 
$$f(q)= a_0+ qa_1+ \cdots + q^{n-1} a_{n-1} + q^n.$$
Denote      
    $\mathcal{SR}_c(\Omega):= \mathcal{SR}(\Omega)\cap C(\overline{\Omega},\mathbb H)$.

On the other hand,   in \cite{Bredon} it is found  that  
 a  fiber bundle is denoted as $(X, {\bf{P}}, B, F)$, 
where $X$,  $B$ and $F$ are   Hausdorff spaces and are called  total space,   base space and  the fiber space, respectively.   There exists a   topological group $K$, called   structure group,  acting on $F$ as a group of homeomorphisms.  The  bundle projection ${\bf{P}}: X\to B$ satisfies that  for each   element of $ B$ has  a neighborhood $U\subset B$    and  
  a homeomorphism $\varphi : U\times F\to {\bf{P}}^{-1}(U)$, called a   trivialization  over $U$,  such that 
	${\bf{P}}\circ \varphi (b, y) = b$ for all $b\in U$ and   $y\in F$. 
The family of  all   triviali\-za\-tions,   $\Phi$,  satisfy that 
\begin{enumerate}
\item   If $\varphi:U\times F\to {\bf P}^{-1}(U)$ belongs to  $\Phi$ and if $V\subset U$ then     $\varphi_{\mid_{V\times F}}$ belongs to $\Phi$.
\item If $\varphi,  \phi\in\Phi$ are trivializations  over $U\subset B$  then there exists a map $\psi :U\to K$ such that 
$ \phi (u,y) = \varphi ( u, \psi(u) (y) )$ 
for all $u\in U$ and $y\in F$.      
 \item  $\Phi$ is a maximal family that  satisfies  the previous facts.
\end{enumerate}
A continuous map  $S: B\to X$  such that ${\bf P}\circ S(x)=x$ for all $x\in B$ is a  
   section of $(X, {\bf{P}}, B, F)$, see  \cite{NS} and  an
		arbitrary map $\mathfrak K:A\to B$, where   $A$ is  a nonempty set, induce the  pullback  fiber bundle
			$(\mathfrak K^*(X), {\bf P}', A, F)$,  where 
 $\mathfrak K^*(X) := \{  (a,x)\in A\times X \ \mid \ {\bf P}(x)= \mathfrak K(a)     \} $   and    $ {\bf P}'(a,x)= a$ for all  $(a,x)\in \mathfrak K^*(X)$,   see  \cite{Bredon, AH}.
	
  A  morphism between  two fiber bundles  $$\Gamma : (X_1, {\bf P}_1, B_1, F_1) \to   (X_2, {\bf P}_2, B_2, F_2)$$    
	consists of a pair of  continuous maps  $\Gamma_1: X_1 \to X_2$ and $  \Gamma_2 : B_1 \to  B_2$  such that  the  
diagram 
\begin{align*} \begin{array}{ rcl}                &     {\bf P}_2     &                     \\
                             X_2 & \longrightarrow & B_2  \\
							\Gamma_1 \ \  \uparrow	&							&  \uparrow	 \  \    \Gamma_2 \\		
 														X_1      & \longrightarrow & B_1             				\\
						   					             &     {\bf P}_1     &                      \\
													\end{array}  \end{align*}
				commutes and there exists   a   morphism  $\Gamma^{-1} : (X_2, {\bf P}_2, B_2, F_2) \to   (X_1, {\bf P}_1, B_1, F_1)$  				such  that  
$\Gamma\circ \Gamma^{-1}$ and $\Gamma^{-1}\circ \Gamma$ are the identity morphisms, then $\Gamma$ is an isomorphism,   see  \cite{Bredon, AH}.

An interesting application of the previous theory in the quaternionic slice regular functions was  presented in    \cite{O1}  showing that  
     $(\mathcal H(S_{\Omega}),  \mathcal{P}_{\Omega} ,\mathcal{SR}_c(\Omega), T  )$ is a kind of bi-sphere
bundle, where  $\Omega\subset\mathbb H$ is a bounded   axially symmetric  s-domain, the   set of   pairs of conjugated harmonic functions    on 
\begin{align*}
  \displaystyle  S_\Omega:=\{(x,y) \in \mathbb R ^2 \ \mid  \textrm{ there exists } {\bf i} \in \mathbb S^2  \textrm{ such that } x+y{\bf i} \in \Omega\}
\end{align*}
    that are  conti\-nuous on  $\overline{S_{\Omega}}$ is denoted by 
   $Harm_c^2(S_\Omega)$,  
\begin{align*} 
&	\mathcal H( {S_\Omega})   : = \left\{  \left( \left( \begin{array}{cc}   a  &  b   \\  c & d  \end{array}\right)  ,   ({\bf i},{\bf j})  \right) \   \mid   \    (a,b), (c,d) \in   Harm_c^2( {S_\Omega})  , \  ({\bf i},{\bf j}) \in T \right\}.
\end{align*}    
The projection  bundle 	${ \mathcal{P}}_{\Omega} : \mathcal H( {S_\Omega})   \to \mathcal{SR}_c(\Omega)$ is 
\begin{align*} { \mathcal{P}}_{\Omega}   \left( \left( \begin{array}{cc}   a  &  b   \\  c & d  \end{array}\right)  ,  ({\bf i},{\bf j})  \right)    := P_{{\bf i},{\bf j}}[  a +b {\bf i} +c{\bf j}  + d {\bf i}{\bf j}       ],\end{align*}
and the structure group is $K=\{R_u  \mid u\in \mathbb S^3 \}$. 
 The trivializations
 are given by  
\begin{align*}  \varphi_u  [ f, ({\bf i},{\bf j}) ] := &  \left( \left( \begin{array}{cc}  \displaystyle   D_1[f, u{\bf i}\bar u, u{\bf j}\bar u  ]  &  D_2[f, u{\bf i}\bar u, u{\bf j}\bar u  ] \\   D_3[f, u{\bf i}\bar u, u{\bf j}\bar u  ]  &  D_4[f, u{\bf i}\bar u, u{\bf j}\bar u  ]    \end{array}\right)  ,   (u{\bf i}\bar u, u{\bf j}\bar u )  \right)  ,     \end{align*}  
for all $(f, ({\bf i}, {\bf j}))\in U\times T$, where    $U\subset\mathcal{SR}_c(\Omega)$ is  a neighborhood and   $u\in \mathbb S^3$.

\section{Main results}\label{MainResults}

\subsection{A fiber bundle over the quaternionic slice regular functions}\label{subseccion1}

Paper  \cite{BW} presents several relationships between   the harmonic function theory with  the slice regular functions.
 This subsection  presents a   consequence of the harmonicity in  the interpretation of the    quaternionic slice regular functions 
 in terms of  fiber bundles expanding the results obtained in   \cite{O1}.

\begin{defn} \label{DefFiberBundle} Given ${\bf i}\in\mathbb S^2$ and let $\Omega\subset \mathbb H$ be a 
$ {\bf i}$-simply connected     bounded axially symmetric s-domain.
\begin{enumerate}
\item The set $C\textrm{Harm}(\Omega)$ is formed  by harmonic functions $a$ on $\Omega$ such that  $a$,  $a_x$ and $a_y$ are    continuous  on $\bar\Omega$. Given $a,b\in C\textrm{Harm} (\Omega)$ 
define $a\sim b$ iff there exits   a real constant $ \lambda$ such that   $a=b + \lambda  $.  Define 
$$N_{C\textrm{Harm}(\Omega)}( a ):=\|a\|_{\infty} + \|a_x\|_{\infty}+ \|a_y\|_{\infty}, $$ for all $a\in C\textrm{Harm}(\Omega)$ and in  
		  $C\textrm{Harm} (\Omega) \diagup \sim$ consider the norm 
			$$N_1 ([ a] ):=\inf\{  N_{C\textrm{Harm}(\Omega)}( b ) \  \mid \ b\in [a] \} , $$ for all $[a]\in C\textrm{Harm} (\Omega) \diagup \sim$
			and define  
$${\bf X}_{\Omega}:= (C\textrm{Harm} (\Omega) \diagup \sim)\times (C\textrm{Harm} (\Omega) \diagup \sim) \times T,$$ equipped with the norm 
$$N_{{\bf X}_{\Omega}} ([ a], [c], ({\bf i}, {\bf j}) ) = N_1 ([a]) +N_1 ([c]) + \| ({\bf i}, {\bf j}) \|_{\mathbb R^6}. $$

\item The function set  $  C\mathcal{SR}(\Omega)$ consists of $f \in \mathcal{SR}(\Omega)$ such that   $f$ and $f'$ 	are	  continuous on $\bar \Omega$.  Given $ f,g\in  C\mathcal{SR}(\Omega)$ define $f\sim g$ iff there exists $c\in \mathbb H$ such that $f=g + {c} $. 

Consider ${\bf B}_{\Omega}=  C\mathcal{SR}(\Omega) \diagup \sim$ equipped with the norm 
$$N_{{\bf B}_{\Omega}} ( [ f] ):=\inf\{  \| g\|_{\infty}  \  \mid \ g\in [f] \} , \quad \forall [f]\in {\bf B}_{\Omega}.$$

\item Given $\left(   [a] , [c]    ,  ({\bf i},{\bf j})  \right) \in {\bf X}{\Omega}$ define  	
\begin{align*} {  {\bf P}}_{\Omega}  \left(   [a] , [c]    ,  ({\bf i},{\bf j})  \right)    :=
		[\ P_{{\bf i},{\bf j}}(   \ a  +c{\bf j}  + {\bf i}\int_{(x_0,y_0)} ^{(x ,y)}  -  ( a   + c  {\bf j})_y  dx +   (a  + c   {\bf j})_x   dy     \ )  \ ].\end{align*}
		where the integration is on any path contained  in $S_\Omega$  from $(x_0,y_0)$ to $(x,y)$.

\item  Given     $({\bf i}, {\bf j})\in T $ set    
$$\displaystyle 
{\bf S}_{\Omega, {\bf i}, {\bf j}}([f]):= \left( \ [   D_1(f,  {\bf i} ,  {\bf j})  ]  ,  [  D_3(f,  {\bf i} ,  {\bf j} ) ]   ,
(  {\bf i} , {\bf j} ) \right)  ,       
$$   
for  all $ [f] \in {\bf B} $.

\item    Let  $U\subset {\bf B}$ be a neighborhood and   $u\in \mathbb S^3$. Denote  
\begin{align*}  \varphi_{u}  ( [f], ({\bf i},{\bf j}) )  :=  \left(  [ \displaystyle   D_1(f, u{\bf i}\bar u, u{\bf j}\bar u ) ] ,  [D_3(f, u{\bf i}\bar u, u{\bf j}\bar u ) ]   ,   (u{\bf i}\bar u, u{\bf j}\bar u )  \right)  ,     \end{align*}  
for all $([f], ({\bf i}, {\bf j}))\in U\times T$.

\end{enumerate}
\end{defn}

\begin{rem}

Note the equivalence relations   established to obtain 
 ${\bf X}_{\Omega}$  and ${\bf B}_{\Omega}$ allow us  to see    that      ${  {\bf P}}_{\Omega}$ is   well-defined and to  use $T$ as our fiber space.
\end{rem}

\begin{prop} \label{FiberBundle}     $({\bf X}_{\Omega},  {\bf P}_{\Omega}  ,{\bf B}_{\Omega}, T  )$  is a fiber bundle where the structure group is $K=\{R_u  \mid u\in \mathbb S^3 \}$ and  the  family of trivializations      is  $\Phi =\{ \varphi_u    \mid  u\in\mathbb S^3\}$. 
\end{prop}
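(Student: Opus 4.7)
The plan is to verify, one by one, the three axioms in the definition of fiber bundle for $({\bf X}_\Omega, {\bf P}_\Omega, {\bf B}_\Omega, T)$ together with the compatibility of the family $\Phi$ with the structure group $K$. The first step is to check that ${\bf P}_\Omega$ is well-defined and continuous. Because $\Omega$ is ${\bf i}$-simply connected, every slice $\Omega_{\bf l}$ with ${\bf l}\in\mathbb S^2$ is simply connected, so the line integral in the definition of ${\bf P}_\Omega$ is path-independent and, by \eqref{conj_Harm}, delivers a conjugate harmonic of $a$ (and of $c$) that is unique up to an additive real constant. Changing the representatives $a,c$ by real constants, or changing the basepoint of the integration, modifies the argument of $P_{{\bf i},{\bf j}}$ by an element of $\mathbb H$; since $P_{{\bf i},{\bf j}}$ is $\mathbb H$-linear and fixes constants, the class in ${\bf B}_\Omega$ is unaffected. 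Continuity of ${\bf P}_\Omega$ with respect to $N_{{\bf X}_\Omega}$ and $N_{{\bf B}_\Omega}$ follows from the boundedness of both the integration operator (controlled by the $\sup$-norms of the partial derivatives appearing in $N_{C\textrm{Harm}}$) and of $P_{{\bf i},{\bf j}}$.

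Next I show that each $\varphi_u$ is a homeomorphism from $U\times T$ onto ${\bf P}_\Omega^{-1}(U)$ satisfying ${\bf P}_\Omega\circ\varphi_u=\pi_1$. The crux is the case $u=1$: given $[f]\in U$ and $({\bf i},{\bf j})\in T$, the Splitting Lemma applied to $Q_{{\bf i},{\bf j}}[f]=(D_1+D_2{\bf i})+(D_3+D_4{\bf i}){\bf j}$ says that $D_1+D_2{\bf i}$ and $D_3+D_4{\bf i}$ are holomorphic on $\Omega_{\bf i}$, so the Cauchy--Riemann equations identify $D_2,D_4$ with the conjugate harmonics of $D_1,D_3$ as produced by the integral in ${\bf P}_\Omega$. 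Combining with \eqref{ecua678} and \eqref{ecua111} one obtains
\[
{\bf P}_\Omega\bigl(\varphi_1([f],({\bf i},{\bf j}))\bigr)=[P_{{\bf i},{\bf j}}(D_1+D_2{\bf i}+D_3{\bf j}+D_4{\bf i}{\bf j})]=[P_{{\bf i},{\bf j}}\circ Q_{{\bf i},{\bf j}}[f]]=[f].
\]
The general $u$ is obtained by replacing $({\bf i},{\bf j})$ with $R_u({\bf i},{\bf j})\in T$. A continuous two-sided inverse for $\varphi_u$ is essentially ${\bf S}_{\Omega,u{\bf i}\bar u,u{\bf j}\bar u}$ composed with the inverse rotation on the $T$-coordinate; continuity in both directions follows from \eqref{equa1234} and from the norm estimates just described.

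For the structure-group axiom one verifies by direct substitution that, writing $w=\bar u v\in\mathbb S^3$ and using $u(w{\bf i}\bar w)\bar u = v{\bf i}\bar v$,
\[
\varphi_v([f],({\bf i},{\bf j}))=\varphi_u([f],R_{\bar u v}({\bf i},{\bf j})),
\]
so the transition map between any two trivializations is the constant $K$-valued function $R_{\bar u v}$; the structure group is the continuous image of $\mathbb S^3$ under $u\mapsto R_u$, acts by homeomorphisms on $T$ (since $R_u$ restricts to a rotation of $\mathbb R^3$ preserving the co-orientation that defines $T$), and composition satisfies $R_u R_v = R_{uv}$. Axiom (1) is immediate, since the restriction $\varphi_u|_{V\times T}$ for $V\subset U$ open is still of the same form. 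Finally, enlarging $\Phi$ to a maximal compatible family producing the fiber bundle structure is a standard step. The main technical obstacle is the verification that $\varphi_u$ is actually a \emph{homeomorphism}, which forces one to check carefully that the passage $[f]\mapsto ([D_1(f,\cdot,\cdot)],[D_3(f,\cdot,\cdot)])$ is continuous for the quotient norms $N_1$ on the harmonic factor and $N_{{\bf B}_\Omega}$ on the base; once \eqref{equa1234} is invoked this reduces to controlling the sup-norms of real linear combinations of $f$ and ${\bf i}f{\bf i}$, ${\bf j}f{\bf j}$, which is straightforward.
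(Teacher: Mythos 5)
Your overall strategy coincides with the paper's: establish continuity of ${\bf P}_{\Omega}$ from the norm estimates, show each $\varphi_u$ is a homeomorphism with ${\bf P}_{\Omega}\circ\varphi_u$ projecting onto the first factor, and exhibit the transition maps as the constant rotations $R_{\bar u v}$ acting on $T$. The well-definedness argument (path-independence from ${\bf i}$-simple connectivity, invariance of the class under additive constants because $P_{{\bf i},{\bf j}}$ is additive and fixes constants), the identification of $D_2,D_4$ as conjugate harmonics of $D_1,D_3$ via the Splitting Lemma, and the cocycle computation all match what the paper does.

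There is, however, one genuine gap, precisely at the point you yourself flag as ``the main technical obstacle'' and then dismiss as straightforward. The target norm of $\varphi_u$ is built from $N_1$, and $N_1([a])$ involves $\|a\|_\infty+\|a_x\|_\infty+\|a_y\|_\infty$, while the source norm $N_{{\bf B}_\Omega}([f])$ involves only $\|f\|_\infty$. Writing $D_1(f,\cdot,\cdot)$ and $D_3(f,\cdot,\cdot)$ as real linear combinations of $f$, ${\bf i}f{\bf i}$, ${\bf j}f{\bf j}$ controls $\|D_1(f)-D_1(g)\|_\infty$ and $\|D_3(f)-D_3(g)\|_\infty$ by $\|f-g\|_\infty$, but it does \emph{not} control the terms $\|D_1(f)_x-D_1(g)_x\|_\infty$, $\|D_1(f)_y-D_1(g)_y\|_\infty$, etc., which require an estimate on $f'-g'$ in terms of $f-g$. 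That passage is not automatic: the paper handles it by applying the Splitting Lemma on a slice, invoking the Weierstrass convergence theorem for holomorphic functions (uniform convergence on $\overline{\Omega_{\bf i}}$ forces uniform convergence of the derivatives), and then recovering $\|f_n'-g'\|_\infty\to 0$ on all of $\Omega$ via the Representation Formula, noting that the Cullen derivative on each slice is $\partial_x$. Your sketch needs this (or an equivalent Cauchy-estimate argument) to make $\varphi_u$ continuous into $({\bf X}_\Omega, N_{{\bf X}_\Omega})$; as written, the ``straightforward'' sup-norm control only reaches the zeroth-order part of $N_1$.
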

\begin{proof} The normed space   ${\bf X}_{\Omega}$,  ${\bf B}_{\Omega}$ and $ T$ are Hausdorff  spaces. 
Given $ a,c, p,r \in C\textrm{Harm}(\Omega)$ and     $({\bf i},{\bf j}),
({\bf k},{\bf l})  \in T$ from  Proposition 3.4 presented in \cite{O1} one obtains  
\begin{align*} & \| P_{{\bf i}, {\bf j}}\left(  a +b {\bf i} +c{\bf j}  + d {\bf i}{\bf j}       \right) -   P_{{\bf k},{\bf l}}\left(  p +  q {\bf k} + r {\bf l}  + s {\bf kl}      \right) \|_{\infty}  \\
 \leq &  2 (  \   \|a-p\|_{\infty}  + \|b-q\|_{\infty} + \|c-r\|_{\infty} +\|d-s\|_{\infty} + \|({\bf i}, {\bf j})-({\bf k},{\bf l}) \|_{\mathbb R^6} \ )  \\
 &  ( \    \|p\|_{\infty}  + \|q\|_{\infty} + \|r\|_{\infty} +\|s\|_{\infty}  +1  \ ),
\end{align*}
where the harmonic functions $b$, $d$, $q$ and $r$ are given in terms of $a$, $c$, $p$ and  $r$  according  to   \eqref{conj_Harm},  respectively. As $\Omega\subset \mathbb H$ is a bounded set   there exist two constants $k_1,k_2 >0$ such that 
\begin{align*}\|b-q\|_{\infty}  \leq    & k_1 \left( \|a_x-p_x\|_{\infty} + \|a_y-p_y\|_{\infty} \right)     \\
\|d-s\|_{\infty}  \leq    &  k_1\left(  \|c_x-r_x\|_{\infty} + \|c_y-r_y\|_{\infty} \right)
\end{align*}
and   
\begin{align*} & \| P_{{\bf i}, {\bf j}}\left(  a +b {\bf i} +c{\bf j}  + d {\bf i}{\bf j}       \right) -   P_{{\bf k},{\bf l}}\left(  p +  q {\bf k} + r {\bf l}  + s {\bf kl}      \right) \|_{\infty}  \\
 \leq &  k_2 (  \   \|a-p\|_{\infty}  +    \|a_x-p_x\|_{\infty} +  \|a_y-p_y\|_{\infty}
 + \|c-r\|_{\infty} +\|c_x-r_x\|_{\infty}   \\
 & \vspace{.5cm} +\|c_y-r_y\|_{\infty} + \|({\bf i}, {\bf j})-({\bf k},{\bf l}) \|_{\mathbb R^6} \ )   ( \    \|p\|_{\infty}  + \|p_x\|_{\infty} + \|p_y\|_{\infty}   + \|r\|_{\infty}  \\
 & \vspace{.5cm} + \|r_x\|_{\infty}+ \|r_y\|_{\infty}  +1  \ ) 
= \left(  \  N_{C\textrm{Harm}(\Omega)}( a-p ) + N_{C\textrm{Harm}(\Omega)}( c-r ) \right.  \\ 
 &  \left. + \|({\bf i}, {\bf j})-({\bf k},{\bf l}) \|_{\mathbb R^6} \ \right)  \left( N_{C\textrm{Harm}(\Omega)}( p ) + N_{C\textrm{Harm}(\Omega)}( r ) + 1 \right).
\end{align*}
By the  properties of the infimum of a set one obtains that 
\begin{align*} & N_{{\bf B}_{\Omega}} ( {  {\bf P}}_{\Omega}  \left(   [a] , [c]    ,  ({\bf i},{\bf j})  \right)  -  {  {\bf P}}_{\Omega}  \left(   [p] , [r]    ,  ({\bf k},{\bf l})  \right)    )  \\
\leq & k_2 \left[  N_{{\bf X}_{\Omega}} (  \  [ a] -[p], [c]-[r] , ({\bf i}-{\bf k} , {\bf j}    -   {\bf l})  \   )      \right]   (  N_1 ([p]) +N_1 ([r]) + 1 ) .
\end{align*}
Therefore,     $ {\bf P}_{\Omega} : {\bf X}_{\Omega} \to {\bf B}_{\Omega}$ 
is a continuous mapping.

Analogously to the computations presented in  Proposition 3.6 given in \cite{O1} one obtains that  
 $K=\{R_u \ \mid \  u\in \mathbb S^3\}$ equipped with the composition is a topological group which acts on $T$ as a group of homeomorphisms.

Let $U\subset {\bf B}$
 be  a neighborhood. The operator $\varphi_u : U\times  T \to  {\bf P
}^{-1}(U)  $  is a homeomorphism. 

1.-   $\varphi_u $ is a bijective operator.  If  $  \varphi_u  ( [f],({\bf i},{\bf j})) =  \varphi_u  ([ g],({\bf k},{\bf l})]) $ 
then $({\bf i},{\bf j})=({\bf k},{\bf l})$ and there exists $\lambda_1,\lambda_2 \in \mathbb R$ such that  
\begin{align*}
    D_1(f, u{\bf i}\bar u, u{\bf j}\bar u ) =  & D_1(g, u{\bf i}\bar u, u{\bf j}\bar u )+ \lambda_1 ,\\
    D_3(f, u{\bf i}\bar u, u{\bf j}\bar u ) = &  D_3(f, u{\bf i}\bar u, u{\bf j}\bar u )  + \lambda_2
\end{align*}
and using  \eqref{conj_Harm} one concludes that $Q_{u{\bf i}\bar u, u{\bf j}\bar  u }[f] = Q_
{u{\bf i}\bar u, u{\bf j}\bar  u }[g] + c $, where $c\in \mathbb H$ and  applying  $P_{u{\bf i}\bar u, u{\bf j}\bar  u }$ one obtains that $[f]=[g]$.
\item Given $
 \displaystyle \left( [ a],  [c ],   ({\bf i},{\bf j})  \right) \in {\bf P
}^{-1}(U) $. 
Denote   $({\bf k} ,{\bf l})   =( \bar  u {\bf i}  u,  \bar  u {\bf j}  u) \in T$ and   
$$f = P_{{u{\bf k}\bar u},{u {\bf l}\bar u }}(   \ (a  +c{u {\bf l}\bar u })+ {u{\bf k}\bar u}\int_{(x_0,y_0)} ^{(x ,y)}  -  ( a   + c  {u {\bf l}\bar u })_y  dx +   (a  + c   {u {\bf l}\bar u })_x   dy     \ )   ,$$
 that satisfy $ \displaystyle \varphi_u ([f],({\bf k}, {\bf l})) =  \left( [ a]  , [ c]   ,   ({\bf i},  {\bf j})  \right)$.

2.- The inverse operator of $\varphi_u $ is given by 
\begin{align*}\varphi_u^{-1}  \left( [  a] , [ c]  ,   ({\bf k} ,   {\bf l} )  \right) = ( \ {\bf P}_{\Omega} ( [  a], [ c],  
  (\bar u{\bf k}u ,  \bar u {\bf l}u )  )   , (\bar u{\bf k}u,  \bar  u{\bf l} u  ) \ ) .
	\end{align*}

3.- Continuity of $\varphi_u $. Given   $[f],[g]\in U$ and $({\bf i},{\bf j}), ({\bf k},{\bf l})\in \mathbb S^2$, from Proposition 3.5 presented in \cite{O1} one has that
\begin{align*}   &  \|  \displaystyle   D_1(f, u{\bf i}\bar u, u{\bf j}\bar u ) (x,y)-  
 \displaystyle   D_1(g, u{\bf k}\bar u, u{\bf l}\bar u ) (x,y) \| \\
&  + \|  \displaystyle   D_3(f, u{\bf i}\bar u, u{\bf j}\bar u ) (x,y) -   \displaystyle   D_3(g, u{\bf k}\bar u, u{\bf l}\bar u ) (x,y) \| \\ 
 &  + \| (u{\bf i}\bar u    , u{\bf j}\bar u ) - (u{\bf k}\bar u, u{\bf l}\bar u) \|_{\mathbb R^6}    \\
   \leq  &  4 \| f-g \|_{\infty} +   4 \| g(x+{\bf i}y) - g(x+{\bf k}y) \|    \\   
																	&   +   (2 \|   g \|_{\infty}   +1) \|({\bf i}, {\bf j}) - ({\bf k},{\bf l})\|_{\mathbb R^6}. 
\end{align*}
Repeating the previous computations for $f'$,  $g'$ and recalling that this deri\-va\-tive on each slice  $\Omega_{\bf i}$ is given by $\dfrac{\partial}{\partial x}$, or equivalently by $-{\bf i}\dfrac{\partial}{\partial y}$, one concludes that 
\begin{align*}   &  \|  \displaystyle   D_1(f, u{\bf i}\bar u, u{\bf j}\bar u )_x (x,y)-  
 \displaystyle   D_1(g, u{\bf k}\bar u, u{\bf l}\bar u )_x (x,y) \| \\
&  + \|  \displaystyle   D_3(f, u{\bf i}\bar u, u{\bf j}\bar u )_x (x,y) -   \displaystyle   D_3(g, u{\bf k}\bar u, u{\bf l}\bar u )_x (x,y) \| \\ 
 &  + \| (u{\bf i}\bar u    , u{\bf j}\bar u ) - (u{\bf k}\bar u, u{\bf l}\bar u) \|_{\mathbb R^6}    \\
   \leq  &  4 \| f'-g' \|_{\infty} +   4 \| g'(x+{\bf i}y) - g'(x+{\bf k}y) \|    \\   
																	&   +   (2 \|   g' \|_{\infty}   +1) \|({\bf i}, {\bf j}) - ({\bf k},{\bf l})\|_{\mathbb R^6}. 
\end{align*}
and 
\begin{align*}   &  \|  \displaystyle   D_1(f, u{\bf i}\bar u, u{\bf j}\bar u )_y (x,y)-  
 \displaystyle   D_1(g, u{\bf k}\bar u, u{\bf l}\bar u )_y (x,y) \| \\
&  + \|  \displaystyle   D_3(f, u{\bf i}\bar u, u{\bf j}\bar u )_y (x,y) -   \displaystyle   D_3(g, u{\bf k}\bar u, u{\bf l}\bar u )_y (x,y) \| \\ 
 &  + \| (u{\bf i}\bar u    , u{\bf j}\bar u ) - (u{\bf k}\bar u, u{\bf l}\bar u) \|_{\mathbb R^6}    \\
   \leq  &  4 \| f'-g' \|_{\infty} +   4 \| g'(x+{\bf i}y) - g'(x+{\bf k}y) \|    \\   
																	&   +   (2 \|   g' \|_{\infty}   +1) \|({\bf i}, {\bf j}) - ({\bf k},{\bf l})\|_{\mathbb R^6}. 
\end{align*}
Therefore 

\begin{align*}   &  \|  \displaystyle   D_1(f, u{\bf i}\bar u, u{\bf j}\bar u ) (x,y)-  
 \displaystyle   D_1(g, u{\bf k}\bar u, u{\bf l}\bar u ) (x,y) \| \\
&  + \|  \displaystyle   D_3(f, u{\bf i}\bar u, u{\bf j}\bar u ) (x,y) -   \displaystyle   D_3(g, u{\bf k}\bar u, u{\bf l}\bar u ) (x,y) \| \\ 
 &  + \| (u{\bf i}\bar u    , u{\bf j}\bar u ) - (u{\bf k}\bar u, u{\bf l}\bar u) \|_{\mathbb R^6}    \\
  & +  \|  \displaystyle   D_1(f, u{\bf i}\bar u, u{\bf j}\bar u )_x (x,y)-  
 \displaystyle   D_1(g, u{\bf k}\bar u, u{\bf l}\bar u )_x (x,y) \| \\
&  + \|  \displaystyle   D_3(f, u{\bf i}\bar u, u{\bf j}\bar u )_x (x,y) -   \displaystyle   D_3(g, u{\bf k}\bar u, u{\bf l}\bar u )_x (x,y) \| \\ 
 &  + \| (u{\bf i}\bar u    , u{\bf j}\bar u ) - (u{\bf k}\bar u, u{\bf l}\bar u) \|_{\mathbb R^6}    \\
& + \|  \displaystyle   D_1(f, u{\bf i}\bar u, u{\bf j}\bar u )_y (x,y)-  
 \displaystyle   D_1(g, u{\bf k}\bar u, u{\bf l}\bar u )_y (x,y) \| \\
&  + \|  \displaystyle   D_3(f, u{\bf i}\bar u, u{\bf j}\bar u )_y (x,y) -   \displaystyle   D_3(g, u{\bf k}\bar u, u{\bf l}\bar u )_y (x,y) \| \\ 
 &  + \| (u{\bf i}\bar u    , u{\bf j}\bar u ) - (u{\bf k}\bar u, u{\bf l}\bar u) \|_{\mathbb R^6}    \\
   \leq  &  4 \| f-g \|_{\infty} +   4 \| g(x+{\bf i}y) - g(x+{\bf k}y) \|    \\   
																	&   +   (2 \|   g \|_{\infty}   +1) \|({\bf i}, {\bf j}) - ({\bf k},{\bf l})\|_{\mathbb R^6} \\
																	&  +   4 \| f'-g' \|_{\infty} +   4 \| g'(x+{\bf i}y) - g'(x+{\bf k}y) \|    \\   
																	&   +   (2 \|   g' \|_{\infty}   +1) \|({\bf i}, {\bf j}) - ({\bf k},{\bf l})\|_{\mathbb R^6} \\
     & +  4 \| f'-g' \|_{\infty} +   4 \| g'(x+{\bf i}y) - g'(x+{\bf k}y) \|    \\   
																	&   +   (2 \|   g' \|_{\infty}   +1) \|({\bf i}, {\bf j}) - ({\bf k},{\bf l})\|_{\mathbb R^6}. 
\end{align*}
If there is a sequence $(f_n) \in  C\mathcal{SR}(\Omega) $ and $g\in  C\mathcal{SR}(\Omega) $ such that $\|f_n-g\|_{\infty}\to 0$. Then from the Splitting Lemma   on each slice we have two sequences of holomorphic functions that  converge uniformly to holomorphic functions in the compact set $\bar\Omega$ and from the well-known theorem of Weierstrass we have that there exists the uniform convergence between its derivatives. From Representation Theorem one has  that   $\|f'_n-g'\|_{\infty}\to 0$. Thus from the previous reasoning  and the properties of the   infimum of a set   one obtains   the continuity of $\varphi_u$ by sequences.

On the other hand, using  \eqref{conj_Harm}, \eqref{ecua111}  and \eqref{ecua678}  one has that 
\begin{align*}
{ \bf P}_{\Omega}  \circ \varphi_u ( [f], ({\bf i},{\bf j}) )  = & 
{ \bf P }_{\Omega}   \left( [  D_1(f, u{\bf i}\bar u, u{\bf j}\bar u  ) ] ,   [ D_3(f, u{\bf i}\bar u, u{\bf j}\bar u)  ]    ,    (u{\bf i}\bar u, u{\bf j}\bar u )  \right)  \\
= & [f] .
\end{align*}
 for all $([f], ({\bf i}, {\bf j})) \in U\times T$.

Note that,  given  $u,v\in\mathbb S^3$,     $({\bf i}, {\bf j})\in T$     and    
$$[f] =[ P_{{u{\bf i}\bar u},{u {\bf j}\bar u }}(   \ (a  +c{u {\bf j}\bar u })+ {u{\bf i}\bar u}\int_{(x_0,y_0)} ^{(x ,y)}  -  ( a   + c  {u {\bf j}\bar u })_y  dx +   (a  + c   {u {\bf j}\bar u })_x   dy     \ ) ] \in {\bf B}_{\Omega} ,$$
 with $a,c\in CHarm^2(S_\Omega)$, one has that      
\begin{align*}    \varphi_u  ([  f],({\bf i},{\bf j})) =      (  \  [a] , [c]  ,    (v  (\bar v u ) {\bf i}  (\bar u v )\bar v , v(\bar v u) {\bf j}   (\bar u v)\bar v )   \  )  =   \varphi_v  ( [f], R_p({\bf i},{\bf j}) ),    
\end{align*}
where $p=\bar v u\in \mathbb S^3$. 
 
\end{proof}

\begin{rem}\label{REm01} In  $({\bf X}_{\Omega}, {\bf P}_{\Omega}, {\bf B}_{\Omega},  T  )$ the fibers of  $  {\bf B}_{\Omega}$ are
\begin{align*}{ ({\bf P}_{\Omega})}^{-1}([f]):=\{\left([ D_1(f,{\bf i},{\bf j} )]  ,  [D_3(f,{\bf i},{\bf j}) ]  ,   ({\bf i},{\bf j})  \right)  \ \mid \ ({\bf i},{\bf j})\in T \},\end{align*} where $ [f] \in {\bf B} $.
\end{rem}

 \begin{prop}\label{section}  The operator ${\bf S}_{\Omega, {\bf i}, {\bf j}}$ is a section of  $({\bf X}_{\Omega}, {\bf P}_{\Omega}, {\bf B}_{\Omega},  T  )$ for all $({\bf i}, {\bf j})\in T$.
	\end{prop}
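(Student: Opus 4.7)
The plan is to observe that, for the choice $u = 1 \in \mathbb{S}^3$, the trivialization $\varphi_u$ from Proposition \ref{FiberBundle} satisfies $u\mathbf{i}\bar u = \mathbf{i}$ and $u\mathbf{j}\bar u = \mathbf{j}$, so that the formulas for $\varphi_1([f],(\mathbf{i},\mathbf{j}))$ and $\mathbf{S}_{\Omega,\mathbf{i},\mathbf{j}}([f])$ coincide term for term. Thus $\mathbf{S}_{\Omega,\mathbf{i},\mathbf{j}}$ is simply the partial map obtained from $\varphi_1$ by freezing the second coordinate at $(\mathbf{i},\mathbf{j})$, and the two conditions required of a section — well-definedness plus continuity on $\mathbf{B}_\Omega$, and the identity $\mathbf{P}_\Omega \circ \mathbf{S}_{\Omega,\mathbf{i},\mathbf{j}} = \text{Id}_{\mathbf{B}_\Omega}$ — both reduce to facts already established there.

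For well-definedness on the quotient, I would observe that a quaternionic constant $c = c_0 + c_1 \mathbf{i} + c_2 \mathbf{j} + c_3 \mathbf{ij}$ gives, via \eqref{equa1234}, $D_1(c,\mathbf{i},\mathbf{j}) = c_0$ and $D_3(c,\mathbf{i},\mathbf{j}) = c_2$, both real constants; by $\mathbb{R}$-linearity of $D_1, D_3$ in the first slot, the classes $[D_1(f,\mathbf{i},\mathbf{j})]$ and $[D_3(f,\mathbf{i},\mathbf{j})]$ in $C\textrm{Harm}(\Omega)/\!\sim$ depend only on $[f] \in \mathbf{B}_\Omega$. For the identity $\mathbf{P}_\Omega \circ \mathbf{S}_{\Omega,\mathbf{i},\mathbf{j}}([f]) = [f]$ I would unfold the definition: by \eqref{conj_Harm} the line integral applied to $D_1 + D_3 \mathbf{j}$ recovers (up to additive real constants) the harmonic conjugates $D_2$ of $D_1$ and $D_4$ of $D_3$ together as $D_2 + D_4 \mathbf{j}$; left-multiplying by $\mathbf{i}$ and reassembling yields $D_1 + D_2 \mathbf{i} + D_3 \mathbf{j} + D_4 \mathbf{ij}$, which equals $Q_{\mathbf{i},\mathbf{j}}[f]$ up to a quaternionic constant by \eqref{ecua678}; applying $P_{\mathbf{i},\mathbf{j}}$ and invoking \eqref{ecua111} returns $f$ up to that constant, hence $[f]$ in $\mathbf{B}_\Omega$. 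This is essentially the calculation displayed at the end of the proof of Proposition \ref{FiberBundle}.

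Continuity of $\mathbf{S}_{\Omega,\mathbf{i},\mathbf{j}}$ then drops out from the continuity estimates derived there for $\varphi_1$: setting $(\mathbf{k},\mathbf{l}) = (\mathbf{i},\mathbf{j})$ kills every term of the form $\|(\mathbf{i},\mathbf{j})-(\mathbf{k},\mathbf{l})\|_{\mathbb{R}^6}$, leaving a constant multiple of $\|f - g\|_\infty + \|f' - g'\|_\infty$, both of which are controlled by $N_{\mathbf{B}_\Omega}([f] - [g])$ via the Splitting Lemma together with the Weierstrass uniform convergence theorem for holomorphic functions, exactly as invoked in the preceding proof. I do not foresee a substantive obstacle: the content of this proposition is already packaged inside Proposition \ref{FiberBundle}. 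The only delicate point is the book-keeping of the additive constants arising from the harmonic conjugacy \eqref{conj_Harm} and from the path-dependence of the line integral in the definition of $\mathbf{P}_\Omega$, but the two equivalence relations in Definition \ref{DefFiberBundle} were precisely tailored to absorb exactly these ambiguities.
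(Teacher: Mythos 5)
Your proposal is correct and follows essentially the same route as the paper: the identity ${\bf P}_{\Omega}\circ {\bf S}_{\Omega,{\bf i},{\bf j}}=\mathcal I_{{\bf B}_{\Omega}}$ is obtained from \eqref{conj_Harm}, \eqref{ecua111} and \eqref{ecua678}, and continuity is inherited from the estimates for $\varphi_u$ in Proposition \ref{FiberBundle} (your identification ${\bf S}_{\Omega,{\bf i},{\bf j}}=\varphi_1(\cdot,({\bf i},{\bf j}))$ is exactly the device the paper uses for the analogous section in Proposition \ref{ZeroFiber}). You merely spell out details the paper leaves implicit, namely well-definedness on the quotients and the reassembly of the conjugate harmonics.
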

\begin{proof}  	From  
formulas \eqref{conj_Harm}, 
 \eqref{ecua111}  and  \eqref{ecua678} one has  that  
\begin{align*} 
{  {\bf P}_{\Omega}} \circ {\bf S}_{{\bf i}, {\bf j}}([f]):= \left( \ [   D_1(f,  {\bf i} ,  {\bf j})  ]  ,  [  D_3(f,  {\bf i} ,  {\bf j} ) ]   ,
(  {\bf i} , {\bf j} ) \ \right) =[f] , \end{align*}         
for  all $ [f] \in {\bf B}_{\Omega} $ and the   	continuity of ${\bf S}_{ {\bf i},{\bf j}}$ is proved in the same way as   the continuity of   $\varphi_u$  in  the previous proposition.
\end{proof}

\begin{cor}\label{someFibers} Given $f\in C\mathcal {SR}(\Omega)$.
\begin{enumerate}
\item For each  $({\bf i}, {\bf j})\in T$ there are  unique   $[a],[c]\in C\textrm{Harm} (S_\Omega)\diagup \sim$ such that 
 $ ([ a],[ c]  ,  ({\bf i},{\bf j})  ) \in {\bf P}^{-1}(  [f] )
$.
 \item     $Q_{{\bf i}, {\bf j}}(f)\in \textrm{Hol} (\Omega_{\bf i})$ if and only if there exists $a \in C\textrm{Harm}(S_\Omega)$ such that 
 $( [ a] , [0] , ({\bf i},{\bf j}) ) \in {\bf P}^{-1}( [ f ])
$.  
\item  $f$ is an intrinsic slice regular functions on $\Omega$, see   \cite{CGS3,GS},
 if and only if  there exists $a \in C\textrm{Harm}(S_\Omega)$ such that  
 $ ( [a]  , 0  ,   ({\bf i},{\bf j}) ) \in {\bf P}^{-1}(  [f] )
$ for all  $({\bf i}, {\bf j})\in T$. 
\end{enumerate}
\end{cor}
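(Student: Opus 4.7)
The plan is to deduce all three claims directly from the fiber bundle structure established in Proposition~\ref{FiberBundle}, combined with the real splitting \eqref{equa1234}--\eqref{ecua678} and the classical fact (recalled in \eqref{conj_Harm}) that a harmonic conjugate is determined up to a real additive constant.

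For part (1), I would produce the preimage explicitly. Given $f\in C\mathcal{SR}(\Omega)$ and $({\bf i},{\bf j})\in T$, the Splitting Lemma together with \eqref{equa1234} tells me that $F:=D_1[f,{\bf i},{\bf j}]+D_2[f,{\bf i},{\bf j}]{\bf i}$ and $G:=D_3[f,{\bf i},{\bf j}]+D_4[f,{\bf i},{\bf j}]{\bf i}$ belong to $\textrm{Hol}(\Omega_{\bf i})$, so, under the natural identification $\Omega_{\bf i}\simeq S_\Omega$, the functions $D_2$ and $D_4$ are harmonic conjugates of $D_1$ and $D_3$. Taking $[a]:=[D_1[f,{\bf i},{\bf j}]]$ and $[c]:=[D_3[f,{\bf i},{\bf j}]]$, the path integral in the definition of ${\bf P}_\Omega$ recovers $D_2$ and $D_4$ up to real constants, and \eqref{ecua111} then forces ${\bf P}_\Omega([a],[c],({\bf i},{\bf j}))=[f]$. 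For uniqueness, I would start from an arbitrary $([a],[c],({\bf i},{\bf j}))\in{\bf P}_\Omega^{-1}([f])$, apply $Q_{{\bf i},{\bf j}}$ to both sides, and use \eqref{ecua111} to obtain
\begin{equation*}
 a+b\,{\bf i}+c\,{\bf j}+d\,{\bf i}{\bf j}=Q_{{\bf i},{\bf j}}[f]+q \qquad \text{for some } q\in\mathbb{H},
\end{equation*}
with $b,d$ the harmonic conjugates produced by the integral. Matching the four real components via \eqref{equa1234} will show that $a-D_1[f,{\bf i},{\bf j}]$ and $c-D_3[f,{\bf i},{\bf j}]$ each equal a single real component of $q$; hence $[a]$ and $[c]$ are uniquely determined.

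For part (2), I would use part (1) in both directions. If $([a],[0],({\bf i},{\bf j}))\in{\bf P}_\Omega^{-1}([f])$, the uniqueness from (1) forces $[D_3[f,{\bf i},{\bf j}]]=[0]$; since $D_4$ is conjugate harmonic to $D_3$, both $D_3$ and $D_4$ are then real constants, so $Q_{{\bf i},{\bf j}}[f]$ equals a $\mathbb{C}({\bf i})$-valued holomorphic function plus a quaternion constant, providing a representative of $[f]$ whose $Q_{{\bf i},{\bf j}}$-image is holomorphic. Conversely, if $Q_{{\bf i},{\bf j}}[f]\in\textrm{Hol}(\Omega_{\bf i})$ then $D_3=D_4=0$ and the existence construction from (1) with $a:=D_1[f,{\bf i},{\bf j}]$ yields the required preimage. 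Part (3) then follows by applying (2) pair-by-pair: an intrinsic slice regular function is characterised by $f(\Omega_{\bf i})\subset\mathbb{C}({\bf i})$ for every ${\bf i}\in\mathbb{S}^2$, which is exactly $Q_{{\bf i},{\bf j}}[f]\in\textrm{Hol}(\Omega_{\bf i})$ for every $({\bf i},{\bf j})\in T$.

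The main technical subtlety I anticipate lies in the constant-bookkeeping of the uniqueness step in (1): the equivalence relation on ${\bf B}_\Omega$ absorbs a full quaternion constant $q=q_0+q_1{\bf i}+q_2{\bf j}+q_3{\bf i}{\bf j}$, whereas each class $[a]$, $[c]$ in $C\textrm{Harm}(\Omega)/\sim$ absorbs only a real constant. The four-way real decomposition \eqref{equa1234} is what makes this work cleanly: the four real components of $q$ distribute into $D_1,D_2,D_3,D_4$, so that only $q_0$ and $q_2$ affect $[a]$ and $[c]$, while $q_1$ and $q_3$ are already absorbed by the harmonic-conjugate ambiguity of $b$ and $d$.
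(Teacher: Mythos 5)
Your proposal is correct and follows essentially the same route as the paper, whose entire proof is a one-line appeal to "the properties of $P_{{\bf i},{\bf j}}$ and $Q_{{\bf i},{\bf j}}$" — you have simply made explicit the intended argument via the decomposition \eqref{equa1234}--\eqref{ecua678}, the identities \eqref{ecua111}, and the harmonic-conjugate formula \eqref{conj_Harm}. Your closing remark on the constant bookkeeping (a quaternion constant in ${\bf B}_\Omega$ versus real constants in each factor of ${\bf X}_\Omega$) is exactly the point the paper leaves implicit, and you handle it correctly.
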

\begin{proof} These facts follow  from the properties of  $P_{{\bf i}, {\bf j}}$ and $Q_{{\bf i}, {\bf j}}$.\end{proof}

\begin{rem}
It is important to comment that Propositions 
\ref{FiberBundle} and \ref{section} and Corollary \ref{someFibers} are deeply analogous to Propositions 3.6 and 3.8 and Corollary 3.9 given in \cite{O1}, respectively. But the sentences presented in this paper simplify the representation of each element of the   total and the base  spaces using the formula \eqref{conj_Harm}.   
 and  to achieve this goal   more requirements were established in our functions such as   two  equivalence relations
 and some conditions  
in the partial derivatives.
\end{rem}

The following operations are necessary to   show some algebraic properties of ${\bf P}_{\Omega}$.

\begin{defn}\label{defOper}Given 
$( [a] , [c] ,   ({\bf i},{\bf j})  ) , 
(  [p], [r]   ,   ({\bf i},{\bf j}) )
 \in {\bf X}$. Define 
 \begin{align*}  
 ( [a] , [c] ,   ({\bf i},{\bf j})  ) 
+ (  [p], [r]   ,   ({\bf i},{\bf j}) )
 := & ( [a+p] , [c+r] ,   ({\bf i},{\bf j})  ) , 
\\
 \mathcal D
\left(   [a] , [c] ,   ({\bf i},{\bf j})     \right)   
:= & ( [a_x] , [c_x] ,   ({\bf i},{\bf j})  ),
\\
 \mathcal R_u
\left( ( [a] , [c] ,   ({\bf i},{\bf j})  )  \right) 
:= &   ( [a] , [c] ,   (u{\bf i}\bar u, u{\bf j}\bar u) )
\end{align*}
where  $u\in \mathbb S^3$. For $f,g\in C\mathcal{SR}(\Omega)$ denote $[f]+[g]=[f+g]$ and $[f]' = [f']$.  \end{defn}

\begin{prop}   Given $({\bf i}, {\bf j})\in T$ and  
  $A,B \in {\bf X}_{\Omega}  $. One obtains that  
  \begin{align*}
	 {\bf P}_{\Omega} (A+B) =&   {\bf P}_{\Omega}(A) + {\bf P}_{\Omega} (B) ,\\ 
 ({\bf P}_{\Omega} (A))'= & {\bf P}_{\Omega}(\mathcal D (A)),\\
 \mathcal R_u(A) = & P_{u{\bf i}\bar u, u{\bf j}\bar u} [ \ u Q_{{\bf i}, {\bf j}} [{{\bf P}}_{\Omega} (A) ] \bar u \ ].
\end{align*}
\end{prop}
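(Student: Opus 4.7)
The plan is to verify each identity by restricting to the slice $\Omega_{\bf i}$ and using the Splitting Lemma together with \eqref{ecua111}. Writing $A=([a],[c],({\bf i},{\bf j}))$ and letting $V_1, V_2\in C\textrm{Harm}(\Omega)$ denote conjugate harmonics of $a, c$ as produced by \eqref{conj_Harm}, linearity of the line integral appearing in the definition of ${\bf P}_{\Omega}$ yields the slice representation
\[
Q_{{\bf i},{\bf j}}[{\bf P}_{\Omega}(A)] = (a+{\bf i}V_1) + (c+{\bf i}V_2){\bf j} \quad \text{on } \Omega_{\bf i},
\]
and this is the common tool for all three claims. Additivity is then immediate: the sum in ${\bf X}_{\Omega}$ is componentwise on the harmonic slots (Definition \ref{defOper}), and both \eqref{conj_Harm} and $P_{{\bf i},{\bf j}}$ are $\mathbb{R}$-linear, so ${\bf P}_{\Omega}(A+B)={\bf P}_{\Omega}(A)+{\bf P}_{\Omega}(B)$ holds at the level of representatives in $C\mathcal{SR}(\Omega)$, hence in ${\bf B}_{\Omega}$.

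For the derivative identity I would use that Cullen's derivative acts as $\partial_x$ on each slice. Applying $\partial_x$ to the slice representation above gives $(a_x+{\bf i}V_{1,x})+(c_x+{\bf i}V_{2,x}){\bf j}=(a_x-{\bf i}a_y)+(c_x-{\bf i}c_y){\bf j}$, using the Cauchy--Riemann identities $V_{1,x}=-a_y$, $V_{2,x}=-c_y$ built into \eqref{conj_Harm}. On the other side, $\mathcal{D}(A)=([a_x],[c_x],({\bf i},{\bf j}))$ is pushed through ${\bf P}_{\Omega}$ to $(a_x+{\bf i}W_1)+(c_x+{\bf i}W_2){\bf j}$ on $\Omega_{\bf i}$, where $W_1, W_2$ are conjugate harmonics of $a_x, c_x$. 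Harmonicity of $a$ gives $(-a_y)_x=-a_{xy}$ and $(-a_y)_y=-a_{yy}=a_{xx}$, so $-a_y$ is a conjugate harmonic of $a_x$; by the uniqueness up to an additive constant mentioned after \eqref{conj_Harm}, $W_1\sim -a_y$, and likewise $W_2\sim -c_y$. The two expressions therefore coincide modulo $\sim$, proving $({\bf P}_{\Omega}(A))'={\bf P}_{\Omega}(\mathcal{D}(A))$.

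For the rotation identity, which I read as an equality in ${\bf B}_{\Omega}$ with ${\bf P}_{\Omega}$ implicitly applied to $\mathcal{R}_u(A)$ on the left, the core computation is the quaternionic conjugation
\[
u\bigl[(a+{\bf i}V_1)+(c+{\bf i}V_2){\bf j}\bigr]\bar u=\bigl(a+(u{\bf i}\bar u)V_1\bigr)+\bigl(c+(u{\bf i}\bar u)V_2\bigr)(u{\bf j}\bar u),
\]
which is valid because $a,c,V_1,V_2$ are real valued (so they commute through $u,\bar u$) and because $u\bar u=1$ gives $u({\bf i}{\bf j})\bar u=(u{\bf i}\bar u)(u{\bf j}\bar u)$. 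On the other hand, $\mathcal{R}_u(A)=([a],[c],(u{\bf i}\bar u,u{\bf j}\bar u))$, and the same linearity argument used at the start shows that the line integral defining ${\bf P}_{\Omega}(\mathcal{R}_u(A))$ splits as $V_1+(u{\bf j}\bar u)V_2$, producing exactly the same slice expression. Applying $P_{u{\bf i}\bar u,u{\bf j}\bar u}$ to both sides and invoking \eqref{ecua111} closes the argument. The main obstacle is bookkeeping in the noncommutative quaternion algebra during the rotation step, but it trivializes precisely because the four scalar functions $a,c,V_1,V_2$ are real-valued, which lets all rotations pass freely across them.
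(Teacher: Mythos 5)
Your proof is correct and follows the route the paper intends: the paper's own proof is the single line ``These facts are consequences of the previous definition,'' and your argument simply carries out that direct verification in detail (the slice representation $(a+{\bf i}V_1)+(c+{\bf i}V_2){\bf j}$ via \eqref{ecua111}, linearity of \eqref{conj_Harm} and $P_{{\bf i},{\bf j}}$, the Cauchy--Riemann identification $W_1\sim -a_y$, and the real-valuedness of $a,c,V_1,V_2$ for the rotation step). Your reading of the third identity as an equality in ${\bf B}_{\Omega}$ after applying ${\bf P}_{\Omega}$ to $\mathcal R_u(A)$ is the sensible interpretation of the paper's notational shortcut.
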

\begin{proof} These facts are consequences  of the previous definition. 
\end{proof}

\begin{rem} The operations  in Definition \ref{defOper} can be   represented by pullbacks in a similar way to   Remark 3.12 of \cite{O1}. Analogously to Proposition 3.14 given in  \cite{O1}, one can see that  the isomorphism of two quaternionic right linear space of slice regular functions associated to ${\bf i}$-simply connected bounded axially symmetric s-domain that  are 
${\bf i}$-conformally equivalents becomes an 
isomorphisms  of the  fiber bundles given in Proposition \ref{FiberBundle}.
 \end{rem}

On the other hand, we shall see a version of the Schwarz's Formula for quaternionic slice regular functions to get an idea of how to define a fiber bundle associated to the slice regular functions using \eqref{SchwarzForm}.

\begin{prop}\label{H1}Given    $f\in \mathcal{SR}_c(\mathbb B^4(0,\rho))$ and ${\bf i}, {\bf j} \in \mathbb S^2$  
 there exist $a,c\in  C{Harm}({\mathbb B^2(0,\rho)}) $ and $\lambda_1, \lambda_2\in\mathbb R$
such that 
\begin{align} \label{projection} 
f(q)= &  \frac{1}{2\pi}  \int_0^{2\pi}   (\rho e^{{\bf i}t} + q )*(\rho e^{{\bf i}t} - q)^{-*} 
				  ( \ a(\rho \cos t, \rho \sin t ) + c(\rho \cos t, \rho \sin t)  {\bf j}  \ ) dt \nonumber \\ 
			&		+ \lambda_1{\bf i} + \lambda_2{\bf ij}, 
								\end{align}
						 for all 	$ q\in \mathbb B^4(0,\rho)$ 	or equivalently
\begin{align*}  
 f(q)    = &
 	    u_{0,a,c} + \sum_{n=1}^{\infty}q^n 
u_{n,a,c}		, \quad \forall q\in\mathbb B^4(0,\rho),
				\end{align*}
 where the quaternionic coefficients are given by  
\begin{align*} 
 u_{0,a,c} &=   \frac{1}{2\pi}  \int_0^{2\pi} ( \ a(\rho \cos t, \rho \sin t ) + c(\rho \cos t, \rho \sin t)  {\bf j}  \ ) dt,	
 \\
u_{n,a,c}	&	=		\frac{1}{\pi} \int_0^{2\pi} (\rho e^{{\bf i} t})^{-n}( \ a(\rho \cos t, \rho \sin t ) + c(\rho \cos t, \rho \sin t)  {\bf j}  \ ) dt ,
\end{align*} 
for all $n\in \mathbb N$.
\end{prop}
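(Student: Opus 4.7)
The plan is to reduce the problem to two applications of the classical complex Schwarz formula via the Splitting Lemma, and then to reconstruct the slice regular function on the full four-ball by slice-regular extension, recognizing the extended Schwarz kernel as a slice-regular $*$-product.

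First I would fix the slice $\mathbb{C}({\bf i})$ and apply the Splitting Lemma on $\mathbb{B}^4(0,\rho)\cap \mathbb{C}({\bf i})=\mathbb{B}^2(0,\rho)$ to write $f\mid_{\mathbb{B}^2(0,\rho)}=F+G{\bf j}$ with $F,G\in \textit{Hol}(\mathbb{B}^2(0,\rho))$. Since $f\in \mathcal{SR}_c(\mathbb{B}^4(0,\rho))$, both $F$ and $G$ extend continuously to the closed disk. Set $a:=\operatorname{Re}F$ and $c:=\operatorname{Re}G$; these are harmonic on the open disk and continuous (together with their partials, by standard Poisson-kernel estimates) on the closed disk, i.e. they lie in $C\textrm{Harm}(\mathbb{B}^2(0,\rho))$. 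Applying \eqref{SchwarzForm} with imaginary unit ${\bf i}$ to $F$ and to $G$ separately yields
\begin{align*}
F(z) &= \tfrac{1}{2\pi}\int_0^{2\pi} a(\rho\cos t,\rho\sin t)\tfrac{\rho e^{{\bf i}t}+z}{\rho e^{{\bf i}t}-z}\,dt+\lambda_1{\bf i},\\
G(z) &= \tfrac{1}{2\pi}\int_0^{2\pi} c(\rho\cos t,\rho\sin t)\tfrac{\rho e^{{\bf i}t}+z}{\rho e^{{\bf i}t}-z}\,dt+\lambda_2{\bf i},
\end{align*}
for suitable $\lambda_1,\lambda_2\in\mathbb{R}$. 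Adding $F+G{\bf j}$ and pulling the factor ${\bf j}$ inside the second integral produces, on the slice $\mathbb{B}^2(0,\rho)$, the integrand $a+c{\bf j}$ multiplied by the Schwarz kernel, plus the constant contribution $\lambda_1{\bf i}+\lambda_2{\bf i}{\bf j}$.

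Next I would pass from $\mathbb{B}^2(0,\rho)$ to the whole ball $\mathbb{B}^4(0,\rho)$ by the Representation Formula: the right-hand side displayed above is holomorphic in $z$ on the slice, so applying $P_{{\bf i},{\bf j}}$ to it (equivalently, pushing the integral out and replacing the slice-kernel by its unique slice regular extension in the variable) gives a function on the ball that agrees with $f$ on $\mathbb{B}^2(0,\rho)$ and is slice regular; by the identity principle it equals $f$. The point is that the slice regular extension of the Cauchy-type kernel $z\mapsto (\rho e^{{\bf i}t}-z)^{-1}$ is $q\mapsto (\rho e^{{\bf i}t}-q)^{-*}$ and that $*$-multiplication by the polynomial $\rho e^{{\bf i}t}+q$ is the slice regular extension of ordinary multiplication. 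This yields exactly \eqref{projection}.

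Finally, to obtain the series form, I would use the geometric expansion $(\rho e^{{\bf i}t}-q)^{-*}=\sum_{n=0}^{\infty}q^{n}(\rho e^{{\bf i}t})^{-(n+1)}$, valid uniformly in $t\in[0,2\pi]$ for $q$ in any compact subset of $\mathbb{B}^4(0,\rho)$ because $\|q\|<\rho$. Star-multiplying by $\rho e^{{\bf i}t}+q$ telescopes to
\[
(\rho e^{{\bf i}t}+q)*(\rho e^{{\bf i}t}-q)^{-*}=1+2\sum_{n=1}^{\infty}q^{n}(\rho e^{{\bf i}t})^{-n}.
\]
Substituting into the integral and interchanging the integral with the sum (justified by the uniform convergence) produces the series with coefficients $u_{0,a,c}$ and $u_{n,a,c}$ as stated. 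The main obstacle I anticipate is not a single hard step but two bookkeeping checks: verifying that the slice-regular extension of the Schwarz kernel in the variable is indeed the $*$-expression written, and justifying the passage of the sum through the integral; both are standard given that $f$ is continuous on the closed ball and the series converges uniformly on compacta of the open ball.
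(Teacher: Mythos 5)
Your proposal is correct and follows essentially the same route as the paper: split $f$ on the slice via $Q_{{\bf i},{\bf j}}$ (the Splitting Lemma), apply the classical Schwarz formula to the two holomorphic components with $a=\operatorname{Re}F$, $c=\operatorname{Re}G$, extend back to the ball with $P_{{\bf i},{\bf j}}$ recognizing the kernel as $(\rho e^{{\bf i}t}+q)*(\rho e^{{\bf i}t}-q)^{-*}$, and expand geometrically to get the coefficient formulas. Your indexing of the geometric series ($1+2\sum_{n\ge 1}q^n(\rho e^{{\bf i}t})^{-n}$) is in fact the cleaner version of what the paper writes.
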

\begin{proof}
Note that $Q_{{\bf i}, {\bf j}}[f]= f_1+f_2{\bf j}$,  where  $f_1,f_2\in \textrm{Hol}(\mathbb B^4(0,\rho)_{\bf i})$ and $f_1,f_2 \in C(\overline{\mathbb B^4(0,\rho)_{\bf i}}, \mathbb C)$. Denote   $a=\textrm{Re} (f_1)$ and 
$c=\textrm{Re} (f_2)$. Thus   from \eqref{SchwarzForm} there exist  $ \lambda_1, \lambda_2 \in \mathbb R$ such that 
\begin{align*}Q_{{\bf i}, {\bf j}} [f](z) = &  f_1(z) + f_2(z) {\bf j} \\
= & \frac{1}{2\pi}  \int_0^{2\pi} \left( a(\rho e^{it})\frac{\rho e^{it} + z }{\rho e^{it} - z} +  
c(\rho e^{it})\frac{\rho e^{it} + z }{\rho e^{it} - z} {\bf j}
 \right)  dt + i \lambda  \lambda_1 {\bf i} + \lambda_2 {\bf i j}
\end{align*}
and applying operator $P_{{\bf i}, {\bf j}}$ one get that 
 \begin{align*}
 f(q) = & \frac{1}{2\pi}  \int_0^{2\pi}P_{{\bf i},{\bf j}}( \ \frac{\rho e^{{\bf i}t} + z }{\rho e^{{\bf i}t} - z} \ ) (q)	( \ a(\rho \cos t, \rho \sin t ) + c(\rho \cos t, \rho \sin t)  {\bf j}  \ )   dt 
		 \nonumber\\ 
	    &\hspace{1cm}+   \lambda_1 {\bf i} + \lambda_2 {\bf i j}, \quad \forall q\in \mathbb B^4(0,\rho).
			\end{align*}
Identity  \eqref{projection} follows from   
 $$P_{{\bf i},{\bf j}}( \  \frac{\rho e^{{\bf i}t} + z }{\rho e^{{\bf i}t} - z} \ ) (q)= (\rho e^{{\bf i}t} + q )*(\rho e^{{\bf i}t} - q)^{-*}  , \quad \forall q\in \mathbb B^4(0,\rho).$$
On the other hand, for $|z|<\rho$ we see that 
\begin{align*}
\frac{\rho e^{{\bf i}t} + z }{\rho e^{{\bf i}t} - z} =  
 \left( 1 + \dfrac{z}{\rho e^{{\bf i}t}} \right)  \left(  \dfrac{1}{ 1 - \dfrac{ z}{ \rho e^{{\bf i}t} }  } \right)  = 1 +     \sum_{n=0}^{\infty}   z^n   \dfrac{ 2}{ \rho^n e^{n{\bf i}t} }  .  
\end{align*}
Therefore 
$$P_{{\bf i},{\bf j}}( \  \frac{\rho e^{{\bf i}t} + z }{\rho e^{{\bf i}t} - z} \ ) (q) = 1 +     \sum_{n=0}^{\infty}  q^n    2 (\rho e^{ {\bf i}t} )^{-n}   , \quad \forall q\in \mathbb B^4(0,\rho).$$
The uniform convergence of previous series 
 implies that
 \begin{align*}
 f(q) = & \frac{1}{2\pi}  \int_0^{2\pi} 	( \ a(\rho \cos t, \rho \sin t ) + c(\rho \cos t, \rho \sin t)  {\bf j}  \ )   dt \\
         &  
				+     \sum_{n=0}^{\infty}  q^n  
				\frac{1}{\pi}  \int_0^{\pi}(\rho e^{ {\bf i}t} )^{-n} 	( \ a(\rho \cos t, \rho \sin t ) + c(\rho \cos t, \rho \sin t)  {\bf j}  \ )   dt,
					\end{align*}
for all $q\in \mathbb B^4(0,\rho)$

\end{proof}

\begin{defn} \label{SeconFiberB}Given   $\mathbb B^2(0,\rho)=\{ (x,y) \in \mathbb R^2 \ \mid \ \|(x,y)\|_{\mathbb R^2} < \rho \}$ we shall consider     $  
C\textrm{Harm}( {\mathbb B^2(0,\rho)}) $,  $ C\mathcal{SR} (\mathbb B^4(0,\rho)) $ and given   
 $ a,c\in \textrm{Harm}_c (\mathbb B^2(0,\rho))$ and  $({\bf i},{\bf j})\in T$ define  
\begin{align*} {  { P}}_{\rho}  \left(   a, c , ({\bf i},{\bf j})  \right)    = & 
		       \frac{1}{2\pi}  \int_0^{2\pi}P_{{\bf i},{\bf j}}( \ \frac{\rho e^{{\bf i}t} + z }{\rho e^{{\bf i}t} - z} \ ) 
		( \ a(\rho \cos t, \rho \sin t ) + c(\rho \cos t, \rho \sin t)  {\bf j}  \ ) dt  , \end{align*}
	and for all   $({\bf i}, {\bf j})\in T $. Also denote     
$$\displaystyle 
{  \psi}_{\rho, u }( f, ({\bf i}, {\bf j}) )= \left( \      D_1(f,  u{\bf i}\bar u ,  u{\bf j}\bar u)     ,      D_3(f,  u{\bf i}\bar u ,  u{\bf j}\bar  u )     ,
(  u{\bf i} \bar  u, u{\bf j}\bar u ) \right)  ,       
$$   
for  all $f\in C\mathcal{SR} (\mathbb B^4(0,\rho)) $, $({\bf i},{\bf j}\in T)$ and all in $u\in \mathbb S^2$.

\end{defn}

\begin{rem}\label{Rem12}

One can  define suitable norms  and  equivalence relations in  
 $C\textrm{Harm}( {\mathbb B^2(0,\rho)}) $ and in  $ C\mathcal{SR} (\mathbb B^4(0,\rho)) $, 
  in a similar way to Definition \ref{DefFiberBundle},   to
	to find a fiber bundle  $({\bf X}_{\rho},  {\bf P}_{\rho}  ,{\bf B}_{\rho}, T  )$. Note that,  one has  a total space $X_{\rho} $  from   $C\textrm{Harm}( {\mathbb B^2(0,\rho)}) $  and a base space ${\bf B}_{\rho}$ from $ C\mathcal{SR} (\mathbb B^4(0,\rho)) $. Moreover,  the operator ${ P}_{\rho} $ allows us to find a bundle projection ${ \bf P}_{\rho} $  while from    ${  \psi}_{\rho, u } $ one  has an idea to define   the trivializations and the sections. Clearly, the structure group is $K=\{R_u  \mid u\in \mathbb S^3 \}$ and the  proofs of these facts  shall be similar to proof of   Proposition  \ref{FiberBundle}.
In addition, some properties of $({\bf X}_{\rho}, {\bf P}_{\rho}, {\bf B}_{\rho},  T  )$ such the representations of the sections, the  algebraic properties of ${\bf P}_{\rho}$, the interpretations of the operations in the base space   
 in terms of pullback bundles and so on  are obtained from a   similar way to those   presented  in Proposition 3.11, Remark 3.12 and Proposition 3.14 given  in \cite{O1}. That’s way we do not write more details of these properties.
\end{rem}

\begin{rem} The details of the fiber bundle $({\bf X}_{\rho}, {\bf P}_{\rho}, {\bf B}_{\rho},  T  )$  are omitted since 
   given ${\bf i}\in \mathbb S^2$ then a  
$ {\bf i}$-simply connected     bounded axially symmetric s-domain  $\Omega\subset\mathbb H$ and  $\mathbb B^4(0,\rho)$ are  ${\bf i}$-conformally equivalents  as a consequence from the Riemann's conformal mapping Theorem applied in the slices $\Omega_{\bf i}$ and $\mathbb B^4(0,\rho)_{\bf i}$ and  from  Proposition 3.14 given  \cite{O1} one obtains  an   isomorphism between the fiber bundles
 $({\bf X}_{\Omega}, {\bf P}_{\Omega}, {\bf B}_{\Omega},  T  )$ and $ (X_{\mathbb B^4(0,\rho)}, {\bf P}_{\mathbb B^4(0,\rho)}, B_{\mathbb B^4(0,\rho)}, T)$.

 \end{rem}

\subsection{Fiber bundles and the  zero sets of some slice regular functions}\label{ZeroSets}

This subsection shall be generalize the one to one relation  $Z_f \to f$, for all complex monic polymonial $f$, to some quaternionic slice regular polynomials using the theory of fiber bundles.

\begin{defn} 
The set  $\mathcal{SRB}(\mathbb B^4)$   consists of
quaternionic slice regular functions on $\mathbb B^4$ such that   
$$f(q)=\sum_{n=0}^{\infty} q^n r_n, \quad \forall q\in \mathbb B^4,$$
where the set $\{r_n \ \mid n\in \mathbb N \cup \{0\} \} \subset\mathbb H$ satisfy that     
 $\{{\bf r }_n \ \mid n\in \mathbb N \cup \{0\} \}$ contains a basis of $\mathbb R^3$ and  
     define  $\mathcal{PSRB}(\mathbb H):= \mathcal{PSR }(\mathbb H) \cap  \mathcal{SRB}(\mathbb B^4)$.

Given a family of sets $\Lambda = \{ A_{\bf i} \subset {\mathbb C}({\bf i}) \   \mid \  {\bf i}\in \mathbb S^2 \}$. The slice kull generated by $\Lambda$ is defined by 
$$   \textrm{SKull}\{ \Lambda\} := \bigcup_{{\bf i}\in \mathbb S^2} \textrm{Kull}(A_{\bf i}) ,  $$ 
where $\textrm{Kull}(A_{\bf i})$  is the kull generated by $A_{\bf i}$ in $\mathbb C({ \bf i})$.
 This concept can be justified by   the slice topology studied in  \cite{IXT1} that  extends  the concept of slice regularity 
 from a convenient topology in $\mathbb H$. In our case, we only are going use $  \textrm{SKull}\{ \Lambda\}$
  to explain a consequence of the    
	  Gauss-Lucas theorem in the elements of the base   and     total spaces of two fiber bundles induced by some slice regular polynomials.

\end{defn}

\begin{prop}\label{propol} Some properties of $\mathcal{SRB}(\mathbb B^4)$.
\begin{enumerate}

\item Given $f,g \in \mathcal{SRB}(\mathbb B^4)$. If there exist  ${\bf i}, {\bf i'}\in \mathbb S^2 $ different unit vectors, {}
 $c\in \mathbb C({\bf i})$ and $c'\in \mathbb C({\bf i'})$ such that  
 $$ f= g \bullet_{{\bf i}, {\bf j} } (1+ c{\bf j})   
\quad \textrm{and} \quad  
f= g \bullet_{{\bf i'}, {\bf j'} } (1+ c'{\bf j'}) ,$$
then $f=g$ on $\mathbb B^4$. 

\item If  $f,g \in \mathcal{PSRB}(\mathbb H)$ and  there exists ${\bf i}, {\bf i'}\in \mathbb S^2 $ such that 
 $Z_{f\pm {\bf i}f{\bf i}} \cap \mathbb C({\bf i}) = Z_{g\pm {\bf i}g{\bf i}} \cap \mathbb C({\bf i})$ and  $Z_{f\pm {\bf i'}f{\bf i'}} \cap \mathbb C({\bf i'}) = Z_{f\pm {\bf i'}g{\bf i'}} \cap \mathbb C({\bf i' })$, then $f=g$.

\end{enumerate}

\end{prop}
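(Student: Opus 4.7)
For item (1), I would translate both identities into conditions on the Taylor coefficients of $f$ and $g$ and then exploit the $\mathcal{SRB}$ hypothesis to force $c=c'=1$. Writing $Q_{{\bf i},{\bf j}}[g]=g_1+g_2{\bf j}$ with $g_1,g_2\in\mathrm{Hol}(\Omega_{\bf i})$, the definition \eqref{iproducto} gives $g\bullet_{{\bf i},{\bf j}}(1+c{\bf j})=P_{{\bf i},{\bf j}}[g_1+g_2c\,{\bf j}]$, so the hypothesis amounts to $f_1=g_1$ and $f_2=g_2c$ on $\Omega_{\bf i}$. In terms of the series $f(q)=\sum_n q^n s_n$ and $g(q)=\sum_n q^n r_n$, decomposing $r_n=r_n^1+r_n^2{\bf j}$ with $r_n^1,r_n^2\in\mathbb C({\bf i})$, the condition becomes $s_n=r_n^1+r_n^2c\,{\bf j}$ for every $n$, hence $s_n-r_n=r_n^2(c-1){\bf j}$ lies in $\mathrm{span}_{\mathbb R}\{{\bf j},{\bf i}{\bf j}\}$, the plane of $\mathrm{Im}(\mathbb H)$ orthogonal to ${\bf i}$. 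The analogous computation with the $({\bf i'},{\bf j'})$-hypothesis confines $s_n-r_n$ to $\mathrm{span}_{\mathbb R}\{{\bf j'},{\bf i'}{\bf j'}\}$, the plane orthogonal to ${\bf i'}$.

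Reading ``different unit vectors'' as ${\bf i}\neq\pm{\bf i'}$ (which the argument genuinely requires, since $\mathbb C({\bf i})=\mathbb C(-{\bf i})$), the two planes intersect in a single line $\ell\subset\mathrm{Im}(\mathbb H)$, so $s_n-r_n\in\ell$ for all $n$. Suppose $c\neq 1$; then $z\mapsto z(c-1){\bf j}$ is an $\mathbb R$-linear isomorphism $\mathbb C({\bf i})\to\mathrm{span}_{\mathbb R}\{{\bf j},{\bf i}{\bf j}\}$. The $\mathcal{SRB}$ hypothesis says $\{{\bf r}_n\}_n$ contains an $\mathbb R$-basis of $\mathbb R^3$; projecting that basis onto the $\{{\bf j},{\bf i}{\bf j}\}$-plane shows that $\{r_n^2\}_n$ $\mathbb R$-spans $\mathbb C({\bf i})$, and therefore its image $\{s_n-r_n\}_n$ $\mathbb R$-spans the whole $2$-plane, contradicting the confinement to $\ell$. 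Hence $c=1$, and by symmetry $c'=1$, so $s_n=r_n$ for every $n$ and $f=g$.

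For item (2) the plan is to reduce to item (1) via the zero-set--to--polynomial bijection. Fix ${\bf j},{\bf j'}$ with $({\bf i},{\bf j}),({\bf i'},{\bf j'})\in T$. A direct splitting computation on $\Omega_{\bf i}$ (cf.~\eqref{identity1}) identifies the pair $\{f-{\bf i}f{\bf i},\,f+{\bf i}f{\bf i}\}$ with $\{2f_1,\,2f_2{\bf j}\}$, and similarly for $g$; hence the zero-set hypotheses become $Z_{f_1}=Z_{g_1}$ and $Z_{f_2}=Z_{g_2}$ inside $\mathbb C({\bf i})$. Since $f,g\in\mathcal{PSR}(\mathbb H)$ are monic, $f_1$ and $g_1$ are monic polynomials in $\mathbb C({\bf i})[z]$, and equality of the zero multisets (whose cardinalities thereby match) forces $f_1=g_1$. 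The equality $Z_{f_2}=Z_{g_2}$ then yields either $f_2\equiv g_2\equiv 0$ (whereupon $f=g$ on $\Omega_{\bf i}$ and the Identity Principle gives $f=g$) or $f_2=\lambda g_2$ for some $\lambda\in\mathbb C({\bf i})\setminus\{0\}$. In the latter case $Q_{{\bf i},{\bf j}}[f]=g_1+g_2\lambda{\bf j}$, which is precisely $f=g\bullet_{{\bf i},{\bf j}}(1+\lambda{\bf j})$. The analogous analysis for $({\bf i'},{\bf j'})$ produces $f=g\bullet_{{\bf i'},{\bf j'}}(1+\lambda'{\bf j'})$, and item (1) closes the proof.

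The main obstacle is the spanning step in item (1): one must check, from the $\mathcal{SRB}$ hypothesis, that $\{r_n^2\}_n$ really does generate $\mathbb C({\bf i})$ over $\mathbb R$. This is exactly the role of the assumption that the vector parts of the Taylor coefficients span $\mathbb R^3$, and the rest of item (1) is bookkeeping. Smaller cautions are (i) to interpret each $Z_f$ as a root multiset, so that equality of zero sets of monic polynomials forces equality of the polynomials, and (ii) to exclude the antipodal case ${\bf i'}=-{\bf i}$, under which the two planes coincide and the dimensional contradiction in item (1) collapses.
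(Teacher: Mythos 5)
Your proof is correct and follows essentially the same route as the paper: both parts reduce the $\bullet$-identities to coefficientwise conditions via the splitting $Q_{{\bf i},{\bf j}}$, invoke the $\mathcal{SRB}$ spanning hypothesis on the vector parts of the Taylor coefficients to force $c=c'=1$, and deduce item (2) from item (1) through the monic-polynomial/zero-set correspondence. Your finishing step for item (1) (the two distinct planes ${\bf i}^{\perp}\neq({\bf i'})^{\perp}$ meet in a line, while the image of a spanning set fills a whole plane) is a cleaner rendering of the paper's ``choosing some vectors ${\bf x}$'', and your caveats about the antipodal case ${\bf i'}=-{\bf i}$ and about reading zero sets with multiplicity address points the paper glosses over.
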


\begin{proof}
\begin{enumerate}

\item If $  f= g \bullet_{{\bf i}, {\bf j} } (1+ c{\bf j}) $ and 
$ f= g \bullet_{{\bf i'}, {\bf j'} } (1+ c'{\bf j'}) $. Therefore, from definition \eqref{iproducto} and identities \eqref{identity1} one has that 
 \begin{align*} f + {\bf i }f {\bf i } =  g + {\bf i }g {\bf i }   ,  & \quad 
 f - {\bf i }f {\bf i } =  cg  -{\bf i } c g {\bf i }, \quad     \textrm{  on } \  \  \mathbb C({\bf i}). \\ 
  f + {\bf i' }f {\bf i' } =  g + {\bf i' }g {\bf i' } , & \quad  f - {\bf i' }f {\bf i' } = c' g  -{\bf i' } c'g   {\bf i' } ,   \quad     \textrm{  on }   \  \   \mathbb C({\bf i'}). 
	\end{align*}
Then 
$ 2f  = (1+ c)g +  {\bf i } (1- c) g {\bf i }   $   on $\mathbb C({\bf i})$
  and 
$ 2f  = (1+ c')g +  {\bf i '} (1- c') g {\bf i '}   $   on $\mathbb C({\bf i'})$. 
 Thus denoting  $f(q)=\sum_{n=0}^{\infty} q^n a_n
 $ and $ g(q)=\sum_{n=0}^{\infty} q^n b_n
  $ for all $q\in \mathbb B^4$ one get that 
$$\sum_{n=0}^{\infty} z^n 2a_n    =   \sum_{n=0}^{\infty} z^n  (1+ c) b_n  +  \sum_{n=0}^{\infty} z^n   {\bf i } (1- c) b_n {\bf i }   
  $$  for all $z \in \mathbb C({\bf i})$ and 
$$\sum_{n=0}^{\infty} w^n 2a_n    =   \sum_{n=0}^{\infty} w^n  (1+ c') b_n  +  \sum_{n=0}^{\infty} w^n   {\bf i' } (1- c') b_n {\bf i '}   
  $$  for all $w \in \mathbb C({\bf i'})$.  Representation theorem  allows to see 
	that  
$$\sum_{n=0}^{\infty} q^n 2a_n    =   \sum_{n=0}^{\infty} q^n  ((1+ c) b_n + {\bf i } (1- c') b_n {\bf i } )   
  $$  and 
$$\sum_{n=0}^{\infty} q^n 2a_n    =   \sum_{n=0}^{\infty} q^n  ((1+ c') b_n + {\bf i '} (1- c') b_n {\bf i '} )   
  $$  for all $q \in \mathbb B^4$. Therefore 
	 $$ (1+ c) {\bf b}_n + {\bf i } (1- c) {\bf b}_n {\bf i }  =  (1+ c') {\bf b}_n + {\bf i '} (1- c') {\bf b}_n {\bf i '}       $$  for all $n\in \mathbb N$ and as a  basis of $\mathbb R^3$ is contained  in $\{ {\bf b}_n \ \mid \ n\in\mathbb N\cup\{0\} \}$   then 
	 $$ (1+ c) {\bf x} + {\bf i } (1- c) {\bf x} {\bf i }  =  (1+ c') {\bf x} + {\bf i '} (1- c') {\bf x} {\bf i '}       $$
 for all ${\bf x} \in \mathbb R^3$ and choosing some  vectors  ${\bf x}$   one has $c=c'=1$. Thus 
$f= g \bullet_{ {\bf i} , {\bf j}  } ( 1+ {\bf j})$, i.e.,  $f=g$.
  
\item Given  $f=f_1+ f_2{\bf j },g=g_1+ g_2{\bf j } \in \mathcal{PSRB}(\mathbb H)$, where $f_1,f_2,g_1,g_2\in \textrm{Hol}(\mathbb C({\bf i})) $ are complex polynomials. Note that  $f_1$ and $g_1$  are monic polynomials. Due to     
 $Z_f \cap \mathbb C({\bf i}) = Z_g \cap \mathbb C({\bf i})$ and 
$Z_f \cap \mathbb C({\bf i'}) = Z_g \cap \mathbb C({\bf i'})$  there exist  $c\in \mathbb C({\bf i})$ and 
$c'\in \mathbb C({\bf i'})$
 such that 
 $$ f= g \bullet_{{\bf i}, {\bf j} } (1+ c{\bf j})   
\quad \textrm{and} \quad  
f= g \bullet_{{\bf i'}, {\bf j'} } (1+ c{\bf j'}) .$$
From the  previous fact   $f=g$. 
\end{enumerate}

\end{proof}

\begin{rem} 
To show the importance of the function set $ \mathcal{SRB}(\mathbb B^4) $  in the previous proposition, let's look at   the following:
\begin{enumerate}

\item If $f$ is an intrinsic slice regular function on $\mathbb B^4(0,1)$, see \cite{CGS3}, there exists a sequence of real numbers $(\lambda_n)_{n\in \mathbb N\cup \{0\}}$ such that $\displaystyle f(q)=\sum_{n=0}^\infty q^n \lambda_n$. Thus  
$f = f \bullet_{{\bf i}, {\bf j} } (1+ c{\bf j}) $ for all ${\bf i}, {\bf j}\in \mathbb S^2$, orthogonal to each other, and all $c\in \mathbb C({\bf i})$.

\item If ${\bf i}, {\bf j}, {\bf k}, {\bf l} \in \mathbb S^2$ with  ${\bf i}$ and ${\bf j}$,   orthogonal to each other, and the same for the pair of vectors ${\bf k} $ and ${\bf l}$ such that ${\bf j}= {\bf k l}$. Then the functions 
 $f(q)=q (1+ {\bf j})$   and $g(q)=q (1-{\bf j}) $ are different and meet that  
$f = g \bullet_{{{\bf i}, {\bf j} }} (1-{\bf j})$ and  $f = g \bullet_{{{\bf k}, {\bf l} }} (1-{\bf l})$.

\end{enumerate}

	On the other hand, given ${\bf i}, {\bf j} \in \mathbb S^2$,   orthogonal to each other, the slice regular polynomials $f(q)=(q^2-1)  + (q-1){\bf j}$ and $g(q)=(q^2-1) 
	+ 7(q-1) {\bf j}$  
				satisfy that $f= g\bullet_{{{\bf i}, {\bf j} }} (1+ 7{\bf j})$. Then  
	$Z_{f\pm {\bf i}f{\bf i}} \cap \mathbb C({\bf i}) = Z_{g\pm {\bf i}g{\bf i}} \cap \mathbb C({\bf i})$.
	But $f\neq g$. That's why the identities   $Z_{f\pm {\bf i'}f{\bf i'}} \cap \mathbb C({\bf i'}) = Z_{g\pm {\bf i'}g{\bf i'}} \cap \mathbb C({\bf i'})$  are necessary in the    previous proposition to  show  that  $f=g$.

\end{rem}

\begin{prop}   \label{ZeroFiber}Denote
\begin{align*} X= &  \{  \left( Z_{f-{\bf i} f{\bf i}}  \cap \mathbb C({\bf i})  \right) \times 
\left( Z_{f+{\bf i} f{\bf i}}  \cap \mathbb C({\bf i})  \right)   \times 
\left( Z_{f-{\bf j} f{\bf j}}  \cap \mathbb C({\bf j})  \right) \times 
\left( Z_{f+{\bf j} f{\bf j}}  \cap \mathbb C({\bf j})  \right)  \\  
  & \hspace{1cm}\  \mid \ f\in \mathcal{PSRB}(\mathbb H) , \  ({\bf i}, 
{\bf j}) \in T \}    
 \end{align*}
and 
${\bf B}= \mathcal{PSRB}(\mathbb H)$ both sets equipped with the discrete topology. By  
$ {\bf P}: {\bf X} \to {\bf B} $   
 we mean the operator
$${\bf P}(   \ \left( Z_{f-{\bf i} f{\bf i}}  \cap \mathbb C({\bf i})  \right) \times 
\left( Z_{f+{\bf i} f{\bf i}}  \cap \mathbb C({\bf i})  \right)   \times 
\left( Z_{f-{\bf j} f{\bf j}}  \cap \mathbb C({\bf j})  \right) \times 
\left( Z_{f+{\bf j} f{\bf j}}  \cap \mathbb C({\bf j})  \right)  \  ) = f.$$
Also given ${u}\in \mathbb S^3$ and  let $U$ be   a neighborhood in $\mathcal{PSRB}(\mathbb H)$ denote 
$\phi_u : U\times T$   given  as follows: 
\begin{align*} 
 \phi_u (f, ({\bf i },{\bf j}))  
= &  
\left( Z_{f-u{\bf i}\bar u fu{\bf i}\bar u}  \cap \mathbb C(u{\bf i}\bar u)  \right) \times 
\left( Z_{f+u{\bf i}\bar u fu{\bf i}\bar u}  \cap \mathbb C(u{\bf i}\bar u)  \right)   
\\
 & \times 
\left( Z_{f-u{\bf j}\bar u fu{\bf j}\bar u}  \cap \mathbb C(u{\bf j}\bar u)  \right) \times 
\left( Z_{f+u{\bf j}\bar u fu{\bf j}\bar u}  \cap \mathbb C(u{\bf j}\bar u)  \right)
. \end{align*}
 
Given $({\bf i },{\bf j})\in T$ define  
\begin{align*}  S_{{\bf i },{\bf j}}(f)
 = & 
\left( Z_{f-{\bf i} f{\bf i}}  \cap \mathbb C({\bf i})  \right) \times 
\left( Z_{f+{\bf i} f{\bf i}}  \cap \mathbb C({\bf i})  \right)  \\
 & \times 
\left( Z_{f-{\bf j} f{\bf j}}  \cap \mathbb C({\bf j})  \right) \times 
\left( Z_{f+{\bf j} f{\bf j}}  \cap \mathbb C({\bf j})  \right).
\end{align*} 
    Then  $ ({\bf X}, {\bf P} ,{\bf B}, T  )$  is a fiber bundle where the structure group is $K=\{R_u  \mid u\in \mathbb S^3 \}$. The  family of trivializations      is  $\Phi =\{ \varphi_u    \mid  u\in\mathbb S^3\}$ and a family of sections is $\{S_{{\bf i },{\bf j}} \  \mid \  ({\bf i },{\bf j})\in T\}. $ 
\end{prop}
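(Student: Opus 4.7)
The plan is to verify the four defining properties of the fiber bundle in order: (a) $\mathbf{P}$ is a well-defined continuous surjection; (b) each $\phi_u$ is a homeomorphism $U\times T \to \mathbf{P}^{-1}(U)$ satisfying $\mathbf{P}\circ \phi_u(f,(\mathbf{i},\mathbf{j}))=f$; (c) the transitions $\phi_v^{-1}\circ\phi_u$ are implemented by elements of $K$; and (d) each $S_{\mathbf{i},\mathbf{j}}$ is a continuous section. Since $\mathbf{X}$ and $\mathbf{B}$ carry the discrete topology and $T\subset \mathbb{R}^6$ is Hausdorff, all three spaces are Hausdorff, and continuity of any map whose source has the discrete topology (or whose target does, starting from a locally constant map) will be automatic throughout.

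The substantial point is well-definedness of $\mathbf{P}$. An element of $\mathbf{X}$ is a $4$-tuple whose first two factors are contained in a single plane $\mathbb{C}(\mathbf{i})$ and whose last two are contained in a single plane $\mathbb{C}(\mathbf{j})$, so the unordered pair $\{\pm\mathbf{i},\pm\mathbf{j}\}$ can be read off from the $4$-tuple; the co-orientation requirement in the definition of $T$ then pins down $(\mathbf{i},\mathbf{j})$ uniquely. Once $(\mathbf{i},\mathbf{j})$ is fixed, Proposition \ref{propol}(2) gives $f$ uniquely, so $\mathbf{P}$ is a function, and it is continuous and surjective for trivial reasons. The same logic handles the bijectivity of $\phi_u$: for injectivity, if $\phi_u(f,(\mathbf{i},\mathbf{j}))=\phi_u(g,(\mathbf{k},\mathbf{l}))$, the planes determine $(u\mathbf{i}\bar u,u\mathbf{j}\bar u)=(u\mathbf{k}\bar u,u\mathbf{l}\bar u)$, so $(\mathbf{i},\mathbf{j})=(\mathbf{k},\mathbf{l})$, and Proposition \ref{propol}(2) gives $f=g$; for surjectivity, given a $4$-tuple in $\mathbf{P}^{-1}(U)$ associated to $(g,(\mathbf{p},\mathbf{q}))$, take $(f,(\mathbf{i},\mathbf{j}))=(g,(\bar u\mathbf{p}u,\bar u\mathbf{q}u))$. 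The relation $\mathbf{P}\circ\phi_u(f,(\mathbf{i},\mathbf{j}))=f$ is immediate from the construction.

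That $K=\{R_u\mid u\in\mathbb{S}^3\}$ is a topological group acting on $T$ by homeomorphisms was proved in \cite{O1} and is imported directly. For the cocycle condition, an argument identical to the last paragraph of the proof of Proposition \ref{FiberBundle} yields
\[ \phi_v\bigl(f,\,R_{\bar v u}(\mathbf{i},\mathbf{j})\bigr) \;=\; \phi_u(f,(\mathbf{i},\mathbf{j})), \]
because $v(\bar v u)\mathbf{i}(\bar u v)\bar v = u\mathbf{i}\bar u$ and similarly for $\mathbf{j}$; thus the transition from $\phi_u$ to $\phi_v$ is the element $R_{\bar v u}\in K$. Finally, $\mathbf{P}\circ S_{\mathbf{i},\mathbf{j}}(f)=f$ is tautological from the definitions of the two maps.

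The main (and really only) obstacle is the well-definedness step, since everything topological collapses under the discrete topology on $\mathbf{X}$ and $\mathbf{B}$. All the work is absorbed into Proposition \ref{propol}(2), whose hypothesis on $\mathcal{PSRB}(\mathbb{H})$ (that the vector parts of the Taylor coefficients contain a basis of $\mathbb{R}^3$) is exactly what rules out the counterexamples in the subsequent remark and makes the zero-set-to-polynomial correspondence injective; I would make sure at the outset that the two-slice zero-set data used in $\mathbf{X}$ are precisely those covered by that proposition, so the recovery of $f$ from a $4$-tuple is unambiguous.
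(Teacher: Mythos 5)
Your proposal is correct and follows essentially the same route as the paper: the entire content is reduced to the well-definedness of ${\bf P}$ via Proposition \ref{propol}(2), with all topological requirements trivialized by the discrete topology on ${\bf X}$ and ${\bf B}$, and the sections recognized as $\phi_1(\cdot,({\bf i},{\bf j}))$. You in fact supply more detail than the paper does (recovering $({\bf i},{\bf j})$ from the planes containing the zero sets, bijectivity of $\phi_u$, and the transition $R_{\bar v u}$), but the underlying argument is the same.
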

\begin{proof} 
The fact 2 of the previous proposition establishes the well-definition of the bundle projection  ${\bf P} $ and also helps us to see that 
${\bf P}\circ  \phi_u (f, ({\bf i },{\bf j}))  = f$ for all $f\in  \mathcal {PSRB}(\mathbb H)$ and  all $({\bf i}, { \bf j})\in T$.
 Fixing $({\bf i}, { \bf j})\in T$ and doing $u=1$ one sees the section   $ S_{{\bf i },{\bf j}}(f)= \phi_1 (f, ({\bf i },{\bf j}))  $
for all $f\in  \mathcal {PSRB}(\mathbb H)$. 
\end{proof}

\begin{rem}
Note that ${\bf X}$ and ${\bf B}$ are equipped with the discrete topology since the important thing  to describe the relationship between  some    slice regular polynomials 
	with their zero sets.

The   mapping  from the finite subsets of $\mathbb C$ to the set  of    
 the  monic complex polynomial $Z_f\mapsto f$ 
  is a bijective mapping.  In this sense Proposition \ref{ZeroFiber}  presents  an extension of this   phenomena  to  $\mathcal{SRB}(\mathbb H)$   explained in terms of the fiber bundle theory and complements the results of the zero sets of slice regular functions presented in \cite{BW,  GSt}
\end{rem}

\begin{defn}We shall establish two fiber bundles: \begin{enumerate}
\item Denote $ {\bf P}\mathcal{SRB}(\mathbb H) = \{f \in \mathcal{PSR}(\mathbb H)  \  \mid \   f' \in \mathcal{PSRB}(\mathbb H)  \} $,
\begin{align*} X_1= &  \{  \left( Z_{ f'  -{\bf  i}  f' {\bf  i}}  \cap \mathbb C({\bf  i})  \right) \times 
\left( Z_{ f' +{\bf  i}  f' {\bf  i}}  \cap \mathbb C({\bf  i})  \right)   \times 
\left( Z_{ f' -{\bf   j}  f' {\bf  j}}  \cap \mathbb C({\bf  j})  \right) \times 
\left( Z_{ f' +{\bf  j}  f' {\bf  j}}  \cap \mathbb C({\bf  j})  \right)  \\  
  & \hspace{1cm}\  \mid \  f  \in {\bf P}\mathcal{PSRB}(\mathbb H) , \  ({\bf  i}, 
{\bf  j}) \in T \} ,   
 \end{align*}
${\bf B}_1=\{ f' \ \mid \ f\in {\bf P}\mathcal{SRB}(\mathbb H)\}$, both equipped with the discrete topology, and define   the mapping  
 $ {\bf P}_1: {\bf X_1} \to {\bf B_1} $   
 as follows 
\begin{align*}&{\bf P}_1(   \ \left( Z_{f'-{\bf i} f'{\bf i}}  \cap \mathbb C({\bf i})  \right) \times 
\left( Z_{f'+{\bf i} f'{\bf i}}  \cap \mathbb C({\bf i})  \right)   \times 
\left( Z_{f'-{\bf j} f'{\bf j}}  \cap \mathbb C({\bf j})  \right) \times 
\left( Z_{f'+{\bf j} f'{\bf j}}  \cap \mathbb C({\bf j})  \right)  \  )  \\  
& = f'.\end{align*}
In addition, define
\begin{align*} 
 \varphi_u^1 (f', ({\bf i },{\bf j}))  
= &  
\left( Z_{f'-u{\bf i}\bar u f'u{\bf i}\bar u}  \cap \mathbb C(u{\bf i}\bar u)  \right) \times 
\left( Z_{f'+u{\bf i}\bar u f'u{\bf i}\bar u}  \cap \mathbb C(u{\bf i}\bar u)  \right)   
\\
 & \times 
\left( Z_{f'-u{\bf j}\bar u f'u{\bf j}\bar u}  \cap \mathbb C(u{\bf j}\bar u)  \right) \times 
\left( Z_{f'+u{\bf j}\bar u f'u{\bf j}\bar u}  \cap \mathbb C(u{\bf j}\bar u)  \right)
, \end{align*}
where   ${u}\in \mathbb S^3$ and   $U\subset P\mathcal{PSRB}(\mathbb H)$ is a neighborhood. Set   
	$K=\{R_u  \mid u\in \mathbb S^3 \}$ and     $\Phi_1 =\{ \varphi_u^1    \mid  u\in\mathbb S^3\}$.

\item Consider 
$${\bf X}_2 : = \{  \textrm{Kull}(Z_f \cap \mathbb {C}( {\bf i})) \times \textrm{Kull}( Z_f \cap \mathbb {C}( {\bf j})) \ \mid \ f \in {\bf P}\mathcal{PSRB}(\mathbb H)  , \   ({\bf i}, {\bf j}  )\in T \},$$
  ${\bf B}_2:=   \{  \textrm{SKull}(\Lambda_f)  \ \mid \  f \in {\bf P}\mathcal{PSRB}(\mathbb H)   \}$, 
	both equipped with the discrete topology, and define 
$ {\bf P}_2: {\bf X}_2 \to {\bf B}_2 $ as follows:  
\begin{align*} {\bf P}_2( \textrm{Kull}(Z_f \cap \mathbb {C}( {\bf i})) \times \textrm{Kull}( Z_f \cap \mathbb {C}( {\bf j})) )=&
\bigcup_{ {\bf v} \in \mathbb S^2}\textrm{Kull}(Z_f \cap \mathbb {C}( {\bf v})) \\
=  &
\textrm{SKull}(\Lambda_f).\end{align*}

Given a neighborhood $U\subset {\bf B}_2
$ define $\phi_u^2 : U\times T$ as follows  
\begin{align*}\varphi_u^2 (\textrm{SKull}(\Lambda_f), ({\bf i },{\bf j})) = &  \textrm{SKull}(\Lambda_f) \cap  \mathbb {C}( u{\bf i}\bar  u) \times \textrm{SKull}(\Lambda_f) \cap  \mathbb {C}( u{\bf j}\bar  u)\\
= &
\textrm{Kull}(Z_f \cap \mathbb {C}( u{\bf i}\bar u)) \times \textrm{Kull}( Z_f \cap \mathbb {C}(u {\bf j}\bar u))  .\end{align*}
Set 
$K=\{R_u  \mid u\in \mathbb S^3 \}$ and     $\Phi_2 =\{ \varphi_u^2    \mid  u\in\mathbb S^3\}$.

\end{enumerate}

\end{defn}

\begin{prop} $({\bf X}_1, {\bf P}_1 ,{\bf B}_1, T  )$   and      $  ({\bf X}_2, {\bf P}_2 ,{\bf B}_2, T  )$  are 
 fiber bundles and there exists a  morphism  $\Gamma : (X_1, {\bf P}_1, B_1, F_1) \to   (X_2, {\bf P}_2, B_2, F_2)$.
\end{prop}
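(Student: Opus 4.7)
The plan is to handle the two fiber bundle claims in parallel with Proposition \ref{ZeroFiber}, and then to construct the morphism $\Gamma$ via the Gauss--Lucas theorem applied on each slice $\mathbb{C}({\bf i})$.

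For $({\bf X}_1, {\bf P}_1, {\bf B}_1, T)$, I would simply reproduce the argument of Proposition \ref{ZeroFiber} with $f$ replaced by $f'$ throughout. Since $f \in {\bf P}\mathcal{SRB}(\mathbb{H})$ forces $f' \in \mathcal{PSRB}(\mathbb{H})$, Proposition \ref{propol}(2) applies to $f'$ and yields well-definedness of ${\bf P}_1$: the four slice zero sets of $f'\pm {\bf i}f'{\bf i}$ and $f'\pm {\bf j}f'{\bf j}$ determine $f'$ uniquely. The discrete topologies on $X_1$ and $B_1$ make continuity of ${\bf P}_1$, of each trivialization $\varphi_u^1$, and of the candidate sections automatic. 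Compatibility of trivializations on overlaps under the structure group $K=\{R_u\mid u\in\mathbb{S}^3\}$ proceeds exactly as in the earlier proof, because $R_u$ acts on $T$ independently of whether we begin from $f$ or $f'$.

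For $({\bf X}_2, {\bf P}_2, {\bf B}_2, T)$, the discrete topologies again trivialize continuity. Well-definedness of ${\bf P}_2$ is built in: $\textrm{SKull}(\Lambda_f)=\bigcup_{{\bf v}\in\mathbb{S}^2}\textrm{Kull}(Z_f\cap\mathbb{C}({\bf v}))$ depends only on $f$, not on the pair $({\bf i},{\bf j})$. The trivialization $\varphi_u^2$ recovers the pair $\textrm{Kull}(Z_f\cap\mathbb{C}(u{\bf i}\bar u))\times\textrm{Kull}(Z_f\cap\mathbb{C}(u{\bf j}\bar u))$ by intersecting $\textrm{SKull}(\Lambda_f)$ with the two rotated slices; the $R_u$-action provides the transition structure, and the remaining bundle axioms (maximality, restriction to subneighborhoods) are verified as in Proposition \ref{ZeroFiber}.

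Finally, the morphism $\Gamma=(\Gamma_1,\Gamma_2)$ is built by reading the data back through an antiderivative of $f'\in {\bf B}_1$. On the base, set $\Gamma_2(f'):=\textrm{SKull}(\Lambda_f)$; on the total space, $\Gamma_1$ sends the $4$-tuple of $f'$-zero sets on $\mathbb{C}({\bf i}),\mathbb{C}({\bf j})$ to the pair $\textrm{Kull}(Z_f\cap\mathbb{C}({\bf i}))\times\textrm{Kull}(Z_f\cap\mathbb{C}({\bf j}))$. Commutativity ${\bf P}_2\circ\Gamma_1=\Gamma_2\circ{\bf P}_1$ holds by construction, and continuity in the discrete topology is free. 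The geometric content is Gauss--Lucas: using the Splitting $f=f_1+f_2{\bf j}$ on $\mathbb{C}({\bf i})$ and $f=g_1+g_2\,({\bf ij})$ on $\mathbb{C}({\bf j})$, one has $Z_{f'_k}\subset\textrm{Kull}(Z_{f_k})$ and $Z_{g'_k}\subset\textrm{Kull}(Z_{g_k})$ as complex polynomials, which is exactly the inclusion the morphism encodes. The main technical point I expect to negotiate is the ambiguity in passing from $f'$ back to $f$ (antiderivatives differ by an additive quaternionic constant, which shifts $Z_f$): I would resolve this by parametrizing ${\bf B}_1$ directly via the natural bijection with ${\bf P}\mathcal{SRB}(\mathbb{H})$ modulo constants, or by fixing a canonical choice of antiderivative, so that $\Gamma_2$ and $\Gamma_1$ become unambiguous.
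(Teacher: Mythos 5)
Your proposal is correct and follows essentially the same route as the paper's own (very terse) proof: the first bundle is obtained by rerunning Proposition \ref{ZeroFiber} with $f$ replaced by $f'$, using part 2 of Proposition \ref{propol} for the well-definedness of ${\bf P}_1$; the second bundle is immediate once the discrete topology makes all continuity requirements vacuous; and the morphism is simply written down, with commutativity of the square holding by construction. The one substantive place where you go beyond the paper is your final paragraph. The paper declares $\Gamma_2[f']=\textrm{SKull}(\Lambda_f)$ and the corresponding $\Gamma_1$ without comment, but an element of ${\bf B}_1$ is the derivative $f'$ alone, and membership of $f$ in ${\bf P}\mathcal{SRB}(\mathbb H)$ is a condition on $f'$ only; hence $f$ and $f+c$ ($c\in\mathbb H$ nonzero, $f+c$ still monic) give the same element of ${\bf B}_1$ while generically $Z_{f+c}\neq Z_f$ and $\textrm{SKull}(\Lambda_{f+c})\neq\textrm{SKull}(\Lambda_f)$. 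So $\Gamma$ as written in the paper is not obviously well defined, and your repair --- quotienting by additive constants or fixing a canonical antiderivative --- is a genuine improvement rather than a redundant precaution. Two minor points of divergence: (i) Gauss--Lucas is not actually needed to define $\Gamma$; the paper defers it to the remark \emph{after} the proposition, where it supplies the inclusion $Z_{f'}\subset\textrm{SKull}(\Lambda_f)$ relating $A$ and $\Gamma_1(A)$, so your invocation of it is contextual rather than logical; (ii) both you and the paper leave unexamined the well-definedness of ${\bf P}_2$ itself, i.e.\ whether the pair $\textrm{Kull}(Z_f\cap\mathbb C({\bf i}))\times\textrm{Kull}(Z_f\cap\mathbb C({\bf j}))$ determines $\textrm{SKull}(\Lambda_f)$; this does not follow from Proposition \ref{propol}, since convex hulls of the slice zero sets carry strictly less information than the zero sets themselves.
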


\begin{proof}   $({\bf X}_1, {\bf P}_1 ,{\bf B}_1, T  )$ 
 is a fiber bundle as a consequence of the previous proposition and         $  ({\bf X}_2, {\bf P}_2 ,{\bf B}_2, T  )$  is a fiber bundle directly from its definition. The morphisms are the following:  
   \begin{align*}
	& \Gamma_1 [  \     
		\left( Z_{ f'  -{\bf  i}  f' {\bf  i}}  \cap \mathbb C({\bf  i})  \right) \times 
\left( Z_{ f' +{\bf  i}  f' {\bf  i}}  \cap \mathbb C({\bf  i})  \right)   \times 
\left( Z_{ f' -{\bf   j}  f' {\bf  j}}  \cap \mathbb C({\bf  j})  \right) \times 
\left( Z_{ f' +{\bf  j}  f' {\bf  j}}  \cap \mathbb C({\bf  j})  \right)
		\   ] \\
&=	\textrm{Kull}(Z_f \cap \mathbb {C}( {\bf i})) \times \textrm{Kull}( Z_f \cap \mathbb {C}( {\bf j})) 
	\end{align*}
	and   	
$\Gamma_2 [ f'  ] 
=	\textrm{SKull}(\Lambda_f)$,
	 	for all $f\in {\bf P}\mathcal{SRB}(\mathbb H)$ and  $({\bf i}, {\bf j})\in T$.
	\end{proof}

\begin{rem} 
Given $ f\in {\bf P}\mathcal{SRB}(\mathbb H)$  and  $({\bf i}, {\bf j})\in T$. Then 
  Gauss Lucas Theorem applied in the complex components of  
	$f\mid_{\mathbb C({\bf i})}$
	 gives us the following contention     
 $$ Z_{ f'  -{\bf  i}  f' {\bf  i}}  \cap \mathbb C({\bf  i})   \cap  
 Z_{ f' +{\bf  i}  f' {\bf  i}} =Z_{f'}\cap \mathbb C({\bf i})  \subset \textrm{Kull}(Z_f \cap \mathbb {C}( {\bf i}))  $$ 
and  
$$   Z_{ f'}  =  \bigcup_{{\bf i} \in \mathbb S^2}    Z_{  f'}\cap \mathbb {C}({ \bf i }) \subset   
\bigcup_{{\bf i} \in \mathbb S^2}  \textrm{Kull} ( \ Z_{  f}\cap \mathbb {C}({ \bf i })
 \ ) ,$$
i.e., $  Z_{ f'} \subset \textrm{SKull}\{ \Lambda_f \} $,  
where $\Lambda_f:= \{    Z_{  f}\cap \mathbb {C}({ \bf i })
  \ \mid \ { \bf i}\in\mathbb S^2  \}$.

Therefore,  Gauss-Lucas Theorem shows us another     relationship between   $A\in X_1$ and  $\Gamma_1(A)\in X_2$ and gives us another point of view   of the mapping  $ f' \mapsto 	\textrm{SKull}(\Lambda_f)$ 
	 	for all $f\in {\bf P}\mathcal{SRB}(\mathbb H)$.

It is important to comment that 
the facts presented in this subsection complement  the results presented in 
 \cite{V}  which shows a quaternionic version of the Gauss-Lucas Theorem for quaternionic slice regular polynomials.
	\end{rem}

  \end{document}